\theoremstyle{plain}
\newtheorem{theorem}{Theorem}[section]
\newtheorem{definition}{Definition}[section]
\newtheorem{example}{Example}[section]
\newtheorem{corollary}{Corollary}[section]
\newtheorem{lemma}{Lemma}[section]
\newtheorem{remark}{Remark}[section]
\begin{document}
\setlist{noitemsep}
\setlist{nolistsep}
\title{Some  Stability Properties of  Parametric Quadratically Constrained Nonconvex Quadratic Programs  in Hilbert Spaces}
\author{V. V. Dong\thanks{V. V. Dong, Phuc Yen College of Industry,  Vietnam,    vuvdong@gmail.com}     
} 
\date{accepted for publication in AMV, 09/6/2017}
\maketitle
\begin{quote}
\noindent {\bf Abstract.} {Stability of  nonconvex quadratic programming problems  under
	finitely many convex quadratic constraints in Hilbert spaces is investigated. We present  several stability properties of
	the global solution map, and the continuity of the optimal value function, 
	assuming that the problem data undergoes small perturbations. }

\noindent {\bf Mathematics Subject Classification (2010).} 90C20,  90C30, 90C31.
\noindent {\bf Key Words.} quadratic program in Hilbert spaces, convex quadratic constraints, solution existence, Legendre form, recession cone, solution set, solution map, optimal value function.
\end{quote}

\section{Introduction}
Let  $\cal H$ be a real Hilbert space with an inner product $\langle \cdot \,,  \cdot \rangle$
and its  induced norm denoted by $\Vert  \cdot  \Vert$. Let ${\cal L}(\cal H)$ be the  space of continuous linear operators  from $\cal H$ into $\cal H$ equipped with the operator norm induced by the vector norm in $\cal H$ and  also  denoted by $\Vert  \cdot  \Vert$.  
The norm in the product space $X_1 \times\ldots\times X_k$ of the normed spaces $X_1,\ldots, X_k$ is defined by $\Vert (x_1, \ldots,  x_k)\Vert = \max\{\Vert x_1 \Vert,\ldots, \Vert x_k\Vert\}.$ 
Let 
	$$\Omega :={\cal L(H)}\times {\cal H}\times {\cal L(H)}^{m}\times {\cal H}^{m}\times {\mathbb R}^m.$$

Consider the following   quadratic programming problem 
\begin{align}\label{eq:QP_w}
\begin{cases}
&\min \; f(x, \omega):=\frac{1}{2}\langle x, Tx\rangle  + \langle c, x\rangle\\
&\mbox{s. t.  }  x\in {\cal H}:   g_i(x,\omega):=\frac{1}{2}\langle x, T_i x\rangle + \langle c_i, x\rangle+\alpha_i\leqslant 0, \, i=1,\dots,m
\end{cases}\tag{{$QP_{\omega}$}}%
\end{align}
where $\omega =(T, c, T_1,\ldots,T_m, c_1,\ldots, c_m,\alpha_1,\ldots,\alpha_m)\in \Omega$, $T:{\cal H}\to {\cal H}$ is a continuous linear self-adjoint operator, $T_i$ is  a positive semidefinite continuous linear self-adjoint operator on $\cal H$,  $c, c_i \in \cal H$, and $\alpha$, $\alpha_i$ are real numbers, $ i=1,2,\dots,m$.

The \textit{constraint set} and the \textit{solution set} of \eqref{eq:QP_w} will be denoted by $F(\omega)$ 
and ${\rm Sol}\eqref{eq:QP_w}$, respectively.

Since $g_i$, $i=1,...,m$, are continuous and convex, $F(\omega)$  is closed and convex. Hence, by Theorem 2.23 in   \cite[ p. 24]{BS00}, the constraint set $F(\omega)$ of \eqref{eq:QP_w} is convex and weakly closed.

The  \textit{recession cone} of the constraint set of \eqref{eq:QP_w} can be described explicitly as follows (the proof is similar to the proof of Lemma 1.1 in \cite{KTY12}): If $F(\omega)$ is nonempty, then
	$${0^+}F(\omega)=\{v\in {\cal H}\; \mid\; T_iv=0, \langle c_i, v\rangle\leqslant 0,\
	\forall i=1,...,m\}.$$

The function
	$$\varphi: \Omega\to \mathbb R \cup \{+\infty\}$$
 defined by 
	$$  \varphi(\omega)= \begin{cases}
	\inf \{f(x, \omega) : x\in F(\omega)\}& \mbox{if } F(\omega) \neq \emptyset\\
	+\infty& \mbox{if } F(\omega) = \emptyset
	\end{cases}$$
is called the \textit{optimal value function} of the parametric problem \eqref{eq:QP_w}.

Quadratic programming problems (QP problems, in short) have been studied fairly completely in the setting of 
Euclidean spaces; see \cite{leetamyen05} and the references therein. For infinite dimensional spaces, it was extended to
Hilbert spaces. Existence of the solutions for QP problems in Hilbert spaces  have been investigated extensively in various versions; see 
\cite{Semple96,Schochetman97,BS00,Sivakumar03} and the references therein.

Stability is an important topic in optimization theory and practical applications. The continuity of the solution set mappings  and of the optimal value function in parametric optimization problems have been intensively studied in literatures; see, e.g., \cite{BS00,Berge} and the 
references therein. 
Bonnans and Shapiro \cite{BS00} gave sufficient conditions for
the upper semicontinuity of the solution set mapping and continuity of the optimal value function in parametric optimization problems  by assuming that the  level set is nonempty and contained in a compact set.   Berge \cite{Berge} gave a sufficient condition for  semicontinuity of the optimal value function in parametric optimization problems.

Since QP problems  form a subclass of nonlinear 
optimization problems, the stability results in nonlinear optimization can be applied to  QP problems in Hilbert spaces.  However, the special structure of QP problems allows one to
have deeper and  sharper results on stability properties of QP problems.

This paper studies  parametric quadratic programming problems in a Hilbert space. The main results of the paper concern continuity properties of the solution map and the optimal value function of the problem whose quadratic part of the objective function is a Legendre form and the constraints are convex under Slater's condition.   
Our results can be seen as an extension of those  in \cite{leetamyen05,Tam99,Tam01} and the references therein to Hilbert spaces.

The paper is organized as follows. In Section 2 we study continuity of  the solution map in a parametric QP problem.
Continuity properties  of the optimal value function of the problem \eqref{eq:QP_w} under a perturbation are investigated in  Section 3.

\section{Continuity of the Global Solution Map }

In this section, we are going to study  continuity properties  of the solution set mapping ${\rm Sol}( \cdot ): \Omega\rightrightarrows {\cal H}$ of \eqref{eq:QP_w}   defined by
	$$  {\rm Sol}(\omega)=\{x\in F(\omega)\mid f(x, \omega) = \varphi(\omega) \}.$$

\begin{definition}
	Let $S : X \rightrightarrows Y$ be a set-valued map from Hilbert space $X$ to Hilbert space $Y$. It is said that $S$ {\it is upper semicontinuous} (usc) at $\bar u\in X$  if for each open set $V \subset Y$ satisfying $S(\bar u)\subset V$, there exists $\varepsilon> 0$  such that $S(u)\subset V$ whenever $\Vert u-\bar u\Vert<\varepsilon$. If for each open set $V\subset Y$ satisfying 	$S(\bar u) \cap  V\neq \emptyset$ there exists $\varepsilon> 0$ such that $S(u) \cap V\neq \emptyset$ whenever $\Vert u-\bar u\Vert<\varepsilon$, then $S$ is 	said to be {\it lower semicontinuous} (lsc) at $\bar u\in X$. If $S$ is simultaneously usc and lsc at $\bar u$, we	say that it is {\it continuous} at $\bar u$.
\end{definition}

The inequality system \; 
$  g_i(x, \omega)\leqslant 0,\; i=1,\ldots, m,$
is called regular if there exists  $x^0\in {\cal H}$ such that $g_i(x^0, \omega)<0, \; i=1,\ldots, m$.

\medskip

In this paper, we will only consider  the continuous quadratic forms in the  form  $Q(x) = \langle x, T x\rangle,$
where $T: {\cal H}\to {\cal H}$ is a continuous linear self-adjoint operator. 

\begin{definition}\label{de:Legendre}\rm {(see \cite[p. 551]{Hestenes1})}
	A quadratic form $Q:{\cal H}\to \mathbb R$ is said to be a Legendre form if  it is weakly lower semicontinuous and $x_k\to x_0$ whenever $x_k$ weakly converges to $x_0$ and $Q(x_k)\to Q(x_0)$.
\end{definition}

It is clear that in the case where ${\cal H}$ is of finite dimension, any quadratic form $Q(x)$ on ${\cal H}$ is a Legendre form.
It is easy to see that on infinite dimensional Hilbert spaces, the quadratic form $\langle x,Ix\rangle$  is a Legendre form while the identity operator  $I$ is noncompact, the quadratic form  $\langle x,0x\rangle$ is not a Legendre form while  the zero operator $0$ is  compact.

\medskip

For each  problem \eqref{eq:QP_w},  we consider the following problem
\begin{align}\label{eq:P_0}
\min \{ \frac{1}{2} \langle v, Tv \rangle : v\in 0^+F(\omega)\}. \tag{${QPR_{\omega}}$}
\end{align}
Let us denote by  
${\rm Sol}\eqref{eq:P_0}$ the solution set of  \eqref{eq:P_0}.
The problem \eqref{eq:P_0} is closely related to \eqref{eq:QP_w}. The solution set of  \eqref{eq:P_0}
plays an important role in the study of the stability of the problem \eqref{eq:QP_w}.

\medskip

The following lemmas will be useful in the sequel.

\begin{lemma}\label{remark1}
	Suppose that 
	$x^k\in F(\omega)\backslash\{0\}$ for all $k$, $\Vert x^k\Vert\to\infty$ as $k\to\infty$ and $\Vert x^k\Vert^{-1}x^k$ weakly converges to $\bar v$. Then, $\bar v\in 0^+F(\omega).$
\end{lemma}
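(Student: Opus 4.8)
The plan is to verify directly that $\bar v$ satisfies the two defining conditions of the recession cone recalled in the introduction, namely $T_i\bar v = 0$ and $\langle c_i,\bar v\rangle\le 0$ for every $i=1,\dots,m$. Write $v^k := \Vert x^k\Vert^{-1}x^k$, so that $\Vert v^k\Vert = 1$ for all $k$ and $v^k\rightharpoonup\bar v$. The starting point is the feasibility of each $x^k$, which gives, for every $i$,
\begin{equation*}
\tfrac12\langle x^k,T_ix^k\rangle + \langle c_i,x^k\rangle + \alpha_i \le 0,
\end{equation*}
and I would exploit this single inequality at two different scalings in $\Vert x^k\Vert$.

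First I would extract the condition $T_i\bar v=0$. Dividing the feasibility inequality by $\Vert x^k\Vert^2$ yields $\tfrac12\langle v^k,T_iv^k\rangle + \Vert x^k\Vert^{-1}\langle c_i,v^k\rangle + \Vert x^k\Vert^{-2}\alpha_i\le 0$. Since $\vert\langle c_i,v^k\rangle\vert\le\Vert c_i\Vert$ is bounded and $\Vert x^k\Vert\to\infty$, the last two terms tend to $0$, so $\limsup_k\langle v^k,T_iv^k\rangle\le 0$. On the other hand the quadratic form $v\mapsto\langle v,T_iv\rangle$ is convex and norm-continuous because $T_i$ is positive semidefinite, hence weakly lower semicontinuous, so $0\le\langle\bar v,T_i\bar v\rangle\le\liminf_k\langle v^k,T_iv^k\rangle$. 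Combining the two bounds forces $\langle\bar v,T_i\bar v\rangle=0$. To upgrade this scalar identity to $T_i\bar v=0$ I would use that $\langle\cdot,T_i\cdot\rangle$ is a positive semidefinite symmetric bilinear form, so the Cauchy--Schwarz inequality for such forms gives $\Vert T_i\bar v\Vert^4=\langle T_i\bar v,T_i\bar v\rangle^2\le\langle T_i\bar v,T_i(T_i\bar v)\rangle\,\langle\bar v,T_i\bar v\rangle=0$, whence $T_i\bar v=0$ (equivalently, writing $T_i=T_i^{1/2}T_i^{1/2}$, one gets $\Vert T_i^{1/2}\bar v\Vert=0$).

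For the second condition I would instead use the original inequality without dividing by the quadratic scale. Because $T_i$ is positive semidefinite, $\tfrac12\langle x^k,T_ix^k\rangle\ge 0$, so feasibility forces $\langle c_i,x^k\rangle+\alpha_i\le 0$, that is $\langle c_i,x^k\rangle\le-\alpha_i$. Dividing by $\Vert x^k\Vert$ gives $\langle c_i,v^k\rangle\le-\alpha_i\Vert x^k\Vert^{-1}$, and letting $k\to\infty$, using that $v^k\rightharpoonup\bar v$ implies $\langle c_i,v^k\rangle\to\langle c_i,\bar v\rangle$ while the right-hand side tends to $0$, I obtain $\langle c_i,\bar v\rangle\le 0$. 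Since both conditions hold for each $i$, the explicit description of $0^+F(\omega)$ then yields $\bar v\in 0^+F(\omega)$.

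The only genuinely delicate point is the treatment of the quadratic term under weak convergence: $v\mapsto\langle v,T_iv\rangle$ is not weakly continuous, so one cannot simply pass to the limit in $\langle v^k,T_iv^k\rangle$. The argument instead leverages weak lower semicontinuity in one direction and the feasibility bound in the other to pin the limit to exactly $0$, and then upgrades the scalar identity $\langle\bar v,T_i\bar v\rangle=0$ to the operator identity $T_i\bar v=0$ via positive semidefiniteness. I expect this upgrade, together with the correct choice of scaling ($\Vert x^k\Vert^2$ for the $T_i$-part, $\Vert x^k\Vert$ for the $c_i$-part), to be the crux; the remaining manipulations are routine.
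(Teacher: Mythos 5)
Your proof is correct and follows essentially the same route as the paper's: divide the feasibility inequality by $\Vert x^k\Vert^2$ and use weak lower semicontinuity of the positive semidefinite quadratic form $\langle\cdot,T_i\cdot\rangle$ to force $T_i\bar v=0$, then discard the nonnegative quadratic term, divide by $\Vert x^k\Vert$, and pass to the weak limit to get $\langle c_i,\bar v\rangle\leqslant 0$. The only (welcome) difference is that you make explicit, via the Cauchy--Schwarz inequality for semidefinite bilinear forms, the step from $\langle\bar v,T_i\bar v\rangle=0$ to the operator identity $T_i\bar v=0$, which the paper asserts without detail.
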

\begin{proof}
	Since $x^k\in F(\omega)$, we have
	\begin{align}\label{eqlema:rec2_1}
	\frac{1}{2}\langle x^k,T_ix^k\rangle +\langle c_i,x^k\rangle+\alpha_i \leqslant 0\;\; i=1,...,m.
	\end{align}
	Multiplying both sides of  the inequalities in  \eqref{eqlema:rec2_1} by $\Vert x^k\Vert^{-2}$ and  letting $k\rightarrow \infty$, we obtain
	\begin{equation*}
	\underset{k\to\infty}{\liminf}\frac{1}{2}\left\langle \frac{x^k}{\Vert x^k\Vert}, T_i\frac{x^k}{\Vert x^k\Vert}\right\rangle \leqslant 0,\; i=1,...,m.
	\end{equation*}
	Since $T_i$ is positive semidefinite, by Proposition 3 in  \cite[p.~269]{Ioffe}, $\langle x,T_ix\rangle$ is  weakly lower semicontinuous.  Hence, 
	\begin{equation*}
	\frac{1}{2}\langle \bar v, T_i\bar v\rangle \leqslant \underset{k\to\infty}{\liminf}
	\frac{1}{2}\left\langle \frac{x^k}{\Vert x^k\Vert}, T_i\frac{x^k}{\Vert x^k\Vert}\right\rangle \leqslant 0\;\; i=1,...,m.
	\end{equation*}
	By the positive semideniteness of  $T_i$, from this we can deduce that
	\begin{equation} \label{eqlema:rec2_2}
	T_i\bar v=0\quad\, \forall i=1,...,m.
	\end{equation}
	As $\langle x^k,T_ix^k\rangle \geqslant 0$, from \eqref{eqlema:rec2_1} it follows
	that
	\begin{equation*}
	\langle c_i, x^k \rangle+\alpha_i \leqslant 0,\quad \forall i=1,..., m,\ \, \forall k.
	\end{equation*}
	Multiplying the inequality $\langle c_i, x^k \rangle+\alpha_i \leqslant 0$ by
	$\Vert x^k\Vert^{-1}$ and letting $k\rightarrow \infty$, we get
	\begin{equation}\label{eqlema:rec2_3}
	\langle c_i,\bar v \rangle \leqslant 0\quad\, \forall i=1,...,m.
	\end{equation}
	Combining  \eqref{eqlema:rec2_2} with
	\eqref{eqlema:rec2_3} we obtain $\bar v\in 0^+F(\omega)$. \qed
\end{proof}

\begin{lemma}\label{theo:sol_nonemp}
	Consider the problem  \eqref{eq:QP_w}, where  $\langle x, Tx\rangle$  is a Legendre form. Suppose that  $F(\omega)$ is nonempty and ${\rm Sol}\eqref{eq:P_0} = \{0\}$.  Then, ${\rm Sol}(\omega)$  is a nonempty,  closed and bounded set.
\end{lemma}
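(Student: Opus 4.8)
The plan is to show that the objective $f(\cdot,\omega)$ is weakly lower semicontinuous and coercive on $F(\omega)$, and then to invoke weak compactness of its sublevel sets. First I would record two preliminaries. Since $\langle x,Tx\rangle$ is a Legendre form it is weakly lower semicontinuous, and $\langle c,x\rangle$ is weakly continuous, so $f(\cdot,\omega)$ is weakly lower semicontinuous on $\cal H$; recall also that $F(\omega)$ is weakly closed. Second, from the hypothesis ${\rm Sol}\eqref{eq:P_0}=\{0\}$ I would extract the key inequality: because $0^+F(\omega)$ is a cone containing $0$, the optimal value of \eqref{eq:P_0} is $\frac{1}{2}\langle 0,T0\rangle=0$, whence $\langle v,Tv\rangle\geqslant 0$ for every $v\in 0^+F(\omega)$, and $\langle v,Tv\rangle=0$ would make $v$ a minimizer of \eqref{eq:P_0}, forcing $v=0$. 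Thus $\langle v,Tv\rangle>0$ for all $v\in 0^+F(\omega)\setminus\{0\}$.

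The heart of the argument is a coercivity claim: if $x^k\in F(\omega)$ and $\Vert x^k\Vert\to\infty$, then $f(x^k,\omega)\to+\infty$. I would argue by contradiction, assuming that after passing to a subsequence $f(x^k,\omega)\leqslant M$ for some $M$. Setting $v^k=\Vert x^k\Vert^{-1}x^k$, the sequence $\{v^k\}$ is bounded, so (along a further subsequence) $v^k\rightharpoonup\bar v$, and by Lemma \ref{remark1} we have $\bar v\in 0^+F(\omega)$. I would then show $\liminf_k\langle v^k,Tv^k\rangle>0$: weak lower semicontinuity gives $\liminf_k\langle v^k,Tv^k\rangle\geqslant\langle\bar v,T\bar v\rangle\geqslant 0$, and if this $\liminf$ were $0$, then along a subsequence $\langle v^k,Tv^k\rangle\to 0$ and $\langle\bar v,T\bar v\rangle=0$, so $\bar v\in{\rm Sol}\eqref{eq:P_0}=\{0\}$, i.e. $\bar v=0$; but then $v^k\rightharpoonup 0$ with $\langle v^k,Tv^k\rangle\to 0=\langle 0,T0\rangle$ would force $v^k\to 0$ strongly by the Legendre property, contradicting $\Vert v^k\Vert=1$. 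Dividing by $\Vert x^k\Vert^2$, the relation $f(x^k,\omega)\Vert x^k\Vert^{-2}=\frac{1}{2}\langle v^k,Tv^k\rangle+\langle c,v^k\rangle\Vert x^k\Vert^{-1}$ has a positive $\liminf$ (the linear term tends to $0$), so $f(x^k,\omega)\to+\infty$, contradicting $f(x^k,\omega)\leqslant M$.

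With coercivity established the remainder is routine. For any fixed $x^0\in F(\omega)$ the set $L=\{x\in F(\omega):f(x,\omega)\leqslant f(x^0,\omega)\}$ is nonempty, bounded by coercivity, and weakly closed (it is the intersection of the weakly closed set $F(\omega)$ with a sublevel set of the weakly lower semicontinuous $f$); since $\cal H$ is reflexive, $L$ is weakly compact, and a weakly lower semicontinuous function attains its infimum over $L$. That infimum equals $\varphi(\omega)$, which is therefore finite and attained, so ${\rm Sol}(\omega)\neq\emptyset$. Finally ${\rm Sol}(\omega)=\{x\in F(\omega):f(x,\omega)\leqslant\varphi(\omega)\}$ is itself such a sublevel set, hence weakly closed and so norm-closed, and it is bounded by the coercivity claim; this yields that ${\rm Sol}(\omega)$ is nonempty, closed and bounded.

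I expect the main obstacle to be the case $\bar v=0$ in the coercivity step: a normalized sequence may converge weakly to $0$ while retaining norm $1$, and it is exactly here that the strong-convergence clause in the definition of a Legendre form (Definition \ref{de:Legendre}) is indispensable, ruling out $\liminf_k\langle v^k,Tv^k\rangle=0$. Mere positive semidefiniteness of $T$, or weak lower semicontinuity of $\langle x,Tx\rangle$ alone, would not suffice to close this gap.
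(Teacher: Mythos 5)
Your proof is correct, and it takes a genuinely different route from the paper's. The paper does not prove existence directly: it invokes an external Frank--Wolfe-type theorem (\cite[Theorem 2]{DongTam15a}, valid for Legendre-form objectives) which reduces nonemptiness of ${\rm Sol}(\omega)$ to showing that $f(\cdot,\omega)$ is merely \emph{bounded below} on $F(\omega)$, and then runs two separate normalization-and-contradiction arguments --- one for boundedness below, one for boundedness of ${\rm Sol}(\omega)$. You instead prove the stronger statement that $f(\cdot,\omega)$ is \emph{coercive} on $F(\omega)$, and then obtain existence self-containedly by the direct method (bounded sublevel sets, weak sequential compactness in the Hilbert space, attainment of the infimum of a weakly lower semicontinuous function); closedness and boundedness of ${\rm Sol}(\omega)$ then fall out of coercivity and weak lower semicontinuity for free, with no second contradiction argument. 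The core mechanism is the same in both proofs: normalize an unbounded sequence, pass to a weak limit $\bar v\in 0^+F(\omega)$ via Lemma \ref{remark1}, get $\langle\bar v,T\bar v\rangle\leqslant 0$, and use the Legendre property to rule out $\bar v=0$ (since $\Vert v^k\Vert=1$ would force strong convergence to a unit vector), contradicting ${\rm Sol}\eqref{eq:P_0}=\{0\}$ --- your reformulation of that hypothesis as $\langle v,Tv\rangle>0$ on $0^+F(\omega)\setminus\{0\}$ is exactly what the paper uses implicitly. What each approach buys: yours is fully self-contained (no reliance on the cited, still-submitted existence theorem) and yields the stronger intermediate fact of coercivity, indeed weak sequential compactness of ${\rm Sol}(\omega)$; the paper's is shorter given the citation, and the cited theorem is more powerful in general since it needs only boundedness below rather than coercivity --- though under hypothesis ${\rm Sol}\eqref{eq:P_0}=\{0\}$ coercivity holds anyway, as you show, so nothing is lost. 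One small point of phrasing: the lower semicontinuity and closedness notions here are \emph{sequential} weak ones (this is what Definition \ref{de:Legendre} provides), so ``weakly closed'' and ``weakly compact'' in your sublevel-set argument should be read as ``weakly sequentially closed/compact''; the direct method goes through verbatim with that reading, so this is a matter of wording, not a gap.
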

\begin{proof}
	We first  prove that ${\rm Sol}(\omega)$  is a nonempty set. By \cite[Theorem 2]{DongTam15a}, it suffices to show that $f(x, \omega)$ is bounded from below over $F(\omega)$.  
	On the contrary, suppose that $f(x, \omega)$ is unbounded from below over $F(\omega)$.  Then,  there exists a sequence $\{x^k\}\subset F(\omega)$ 
	such that $f(x^k)\to -\infty$ as $k\to\infty$. If $\{x^k\}$ is bounded,  it has a
	weakly convergent subsequence. Without loss of generality, we can assume
	that $x^k $ itself weakly converges to some $\bar x$. By the weakly closedness of $F(\omega)$, we have ${\bar x}\in F(\omega)$. Since $\langle x, T x\rangle$ is a Legendre form, it is weakly lower semicontinuous, one has
	$f(\bar x)\leqslant \underset{k\to\infty}{\liminf}f(x^k)=-\infty$, which is impossible. Hence $\{x^k\}$ is unbounded.  
	Since  $f(x^k)\to -\infty$ as $k\to\infty$ and $\{x^k\}$ is unbounded, 
	by taking a subsequence, if necessary, we may assume that 
	\begin{align}\label{eq:theo_sol_nonemp1}
	f(x^k,\omega)= \frac{1}{2}\langle x^k, T x^k\rangle + \langle c, x^k\rangle \leqslant 0
	\end{align}
	for all $k$, $\Vert x^k\Vert\neq 0$, $\Vert x^k\Vert\to \infty$ as $k\to\infty$, 
	and $v^k:=\Vert x^k\Vert^{-1} x^k$ weakly converges to some $\bar v$ as $k\to\infty$.
	It follows from Lemma \ref{remark1} that
	$\overline{v}\in 0^+F(\omega)$. 
	Multiplying both sides of \eqref{eq:theo_sol_nonemp1} by $\Vert x^k\Vert^{-2}$ and letting $k\to\infty$,
	one has 
			\begin{align}\label{eq:theo_sol_nonemp2}
	\langle \bar v, T \bar v\rangle\leqslant 
	\underset{k\to\infty}{\liminf} \frac{1}{2}\langle v^k, T v^k\rangle\leqslant 
	\underset{k\to\infty}{\limsup} \frac{1}{2}\langle v^k, T v^k\rangle \leqslant 0.
	\end{align}
		We next claim  that  $\bar v \neq 0$. Indeed, if  $\bar v = 0$, then it follows from \eqref{eq:theo_sol_nonemp2} that $ \langle v^k,  T v^k\rangle \to  \langle \bar v,T \bar v\rangle$ as $k\to \infty$.  Since $\langle x, T x\rangle$ is a Legendre form, and since  $v^k\rightharpoonup \bar v$, $ \langle v^k,  T v^k\rangle \to  \langle \bar v,T \bar v\rangle$ as $k\to \infty$, by Definition \ref{de:Legendre} we deduce that  $v^k$  converges to $\bar v$ and $\bar v\neq 0$.  Consequently, we have shown that there exists $\bar v \neq 0$ such that  $\bar v\in 0^+F(\omega)$ and $\langle \bar v, T \bar v\rangle  \leqslant 0$.
	But this contradicts our assumption that ${\rm Sol}\eqref{eq:P_0}=\{0\}$. Hence $f(x, \omega)$ is bounded from below over $F(\omega)$ and we have ${\rm Sol}(\omega)$  is a nonempty set.	

	The closedness of  ${\rm Sol}(\omega)$ is evident because $f(x, \omega)$ is
	 weakly lower semicontinuous  and $F(\omega)$ is a  closed convex set.
	
	We next prove that ${\rm Sol}(\omega)$ is bounded.
	 Suppose that ${\rm Sol}(\omega)$ is unbounded.
	Then,  there exists  $\{y^k\}\subset {\rm Sol}(\omega)$ such that  $\Vert y^k\Vert \to \infty$ as $k\to \infty$. Without loss generality we may suppose that $\Vert y^k\Vert\neq 0$.
	Let $v^k:=\frac{y^k}{\Vert y^k\Vert}$, one has $\Vert v^k\Vert = 1$. Since ${\cal H}$  is Hilbert space, extracting if necessary a subsequence, we may assume that $v^k$ itself weakly converges to some $\overline{v}$. It follows from Lemma \ref{remark1} that
$\overline{v}\in 0^+F(\omega)$.
	
	Fixing any $x\in F(\omega)$,  one has
	\begin{align}\label{eq:prbound_1}
	\frac{1}{2}\langle y^k, T y^k \rangle  +\langle c, y^k\rangle\leqslant \frac{1}{2}\langle x, T x\rangle + \langle c, x\rangle.
	\end{align}

	Since  $\langle x, T x\rangle$ is a Legendre form, it is weakly lower semicontinuous.
Dividing both sides of \eqref{eq:prbound_1} by $\Vert y^k\Vert^2$ and letting $k\to\infty$,  we get
\begin{align}\label{eq:prbound_2}
\langle \bar v, T\bar {v}\rangle\leqslant \underset{k\to\infty}{\lim\inf}\,\langle v^k,  T v^k\rangle
\leqslant\underset{k\to \infty}{\lim\sup}\,\langle v^k, T v^k \rangle  \leqslant 0.
\end{align}
By a similar argument to the one given above, we have 
$\bar v\in 0^+F(\omega)\backslash\{0\}$ and $\langle \bar v, T \bar v\rangle  \leqslant 0$, contrary to 
${\rm Sol}\eqref{eq:P_0}=\{0\}$. Hence 
${\rm Sol}(\omega)$ is bounded. \qed
\end{proof}

Note that,  the problem \eqref{eq:QP_w}  may have no solution if the assumption on the Legendre property of the quadratic
form is omitted (see  \cite[Example 3.3]{DongTam15b}).

The next example shows that the conclusion of Lemma \ref{theo:sol_nonemp} fails if the assumption on the Legendre property of the quadratic form is omitted.
\begin{example}\label{exam:unbounded}\rm
	Consider the  programming problem
	\begin{align}\label{ex:unboundedSet}
	\begin{cases}
	& \min  f(x, \omega)= \frac{1}{2}\langle x, T x\rangle \\
	&\mbox{ s. t. }   x\in L_2[0,1],  g_1(x, \omega) = \frac{1}{2} \langle x, T_1 x\rangle + \alpha_1 \leqslant  0,
	\end{cases}
	\end{align}
	where $T= 0$,  $T_1: L_2[0,1]\to L_2[0,1]$ is defined by $T_1x = tx(t)$, $\alpha_1 = -  \frac{1}{4} $ and $\omega = (T, T_1, \alpha_1)$. 
	
	It is easily seen that 
	$\langle x, T x\rangle = \langle x, 0 x\rangle$ is not a Legendre form.
		
	Let 
	$F(\omega)=\{ x\in L_2[0,1] \mid g_1(x, \omega) = \frac{1}{2} \int\limits_0^1 t x^2(t)dt -  \frac{1}{4}\leqslant  0\}.$
	 
	It is  easy to check that $F(\omega)\neq \emptyset$  and 
	$$0^+F(\omega) = \{v\in L_2[0, 1]\mid tv(t) = 0\; \forall t\in [0, 1]\} = \{0\}.$$
	Therefore ${\rm Sol}\eqref{eq:P_0} = \{0\}$.
	
	Let 
		$$x_k(t) = \begin{cases}
		k &  \mbox{ if\;\; } 0\leqslant t\leqslant \frac{1}{k} \\
		0 & \mbox{ if\;\; }  \frac{1}{k} < t \leqslant 1,
	\end{cases}$$
		it is easy to check that $x_k(t)\in L_2[0,1]$. 
	
	We have $g_1(x_k, \omega) = \frac{1}{2} \int\limits_0^{\frac{1}{k}} t k^2 dt -  \frac{1}{4}=  0$ and $\Vert x_k\Vert^2 = \int\limits_0^{\frac{1}{k}}  k^2 dt =k$. Hence $x_k\in F(\omega)$ and $\Vert x_k\Vert\to \infty$ as $k\to\infty$.
	
	Since $f(x,\omega) =0$ for all $x\in F(\omega)$, it follows that  the solution set of \eqref{ex:unboundedSet} coincides with $F(\omega)$. Thus the  solution set of \eqref{ex:unboundedSet} is unbounded.
\end{example}
\begin{corollary}
	Consider the problem  \eqref{eq:QP_w}, where  $\langle x, T x\rangle$  is a nonnegative  Legendre form. Assume that  $F(\omega)$ is nonempty and ${\rm Sol}\eqref{eq:P_0}=\{0\}$.
	Then,   ${\rm Sol}(\omega)$ is nonempty and weakly compact. 
\end{corollary}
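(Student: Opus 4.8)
The plan is to deduce the corollary from Lemma~\ref{theo:sol_nonemp} and then upgrade the topological conclusion using the extra nonnegativity hypothesis. Since $\langle x, Tx\rangle$ is in particular a Legendre form, all the hypotheses of Lemma~\ref{theo:sol_nonemp} are met: $F(\omega)$ is nonempty, $\langle x, Tx\rangle$ is a Legendre form, and ${\rm Sol}\eqref{eq:P_0}=\{0\}$. Hence Lemma~\ref{theo:sol_nonemp} already gives that ${\rm Sol}(\omega)$ is nonempty, closed and bounded. It therefore remains only to promote ``closed and bounded'' to ``weakly compact''.

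First I would record the consequence of nonnegativity. The assumption that $\langle x, Tx\rangle\geqslant 0$ for all $x$ means that $T$ is positive semidefinite, so $f(\cdot,\omega)=\frac{1}{2}\langle x, Tx\rangle+\langle c, x\rangle$ is a convex function on $\cal H$. Since $F(\omega)$ is convex, the problem \eqref{eq:QP_w} is a convex program, and consequently its solution set ${\rm Sol}(\omega)$ is convex.

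Next I would pass to the weak topology. A closed convex subset of a Hilbert space is weakly closed (Mazur's theorem, or the observation that it is an intersection of closed half-spaces), so ${\rm Sol}(\omega)$ is weakly closed. Because $\cal H$ is a Hilbert space, hence reflexive, every bounded weakly closed set is weakly compact (Banach--Alaoglu together with reflexivity: a weakly closed subset of a weakly compact ball is weakly compact). Combining this with the boundedness supplied by Lemma~\ref{theo:sol_nonemp} yields that ${\rm Sol}(\omega)$ is weakly compact, which completes the argument.

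The main obstacle is precisely the weak closedness step. For a general nonconvex \eqref{eq:QP_w} the solution set need not be convex, so Mazur's theorem does not apply directly; it is exactly the nonnegativity assumption that restores convexity and makes the passage from norm-closed to weakly closed legitimate. An alternative route that sidesteps convexity would be to verify weak sequential closedness by hand: if $x^k\in{\rm Sol}(\omega)$ and $x^k\rightharpoonup\bar x$, then $\bar x\in F(\omega)$ by weak closedness of $F(\omega)$, while weak lower semicontinuity of the Legendre form gives $f(\bar x,\omega)\leqslant\underset{k\to\infty}{\liminf}\,f(x^k,\omega)=\varphi(\omega)$, forcing $\bar x\in{\rm Sol}(\omega)$; boundedness together with weak sequential closedness then yields weak compactness via the Eberlein--\v{S}mulian theorem.
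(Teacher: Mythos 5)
Your proposal is correct and follows essentially the same route as the paper: invoke Lemma~\ref{theo:sol_nonemp} for nonemptiness, closedness and boundedness, use nonnegativity of $\langle x, Tx\rangle$ to make \eqref{eq:QP_w} a convex problem so that ${\rm Sol}(\omega)$ is convex, and then conclude weak compactness of the closed bounded convex set (the paper cites Theorem~3.3 of Bauschke--Combettes for this last step, whereas you prove it directly via Mazur's theorem and reflexivity). Your closing remark is also a nice observation: the direct verification of weak sequential closedness via weak lower semicontinuity of $f(\cdot,\omega)$, combined with Eberlein--\v{S}mulian, is valid and shows that the convexity supplied by nonnegativity is not actually needed for the weak compactness conclusion.
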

\begin{proof}
	It follows from Lemma \ref{theo:sol_nonemp}  that ${\rm Sol}(\omega)$ is a nonempty and bounded set. Since $\langle x, T x\rangle$  is  nonnegative, it follows that  \eqref{eq:QP_w} is a convex problem. Hence ${\rm Sol}(\omega)$ is a convex set. By Theorem 3.3 in 
	\cite{Bauschke2011},  ${\rm Sol}(\omega)$ is weakly compact. \qed
\end{proof}

\begin{lemma}{\rm {\cite[Theorem 5.1]{Dinh}}}\label{pro-lscF}
	Let $\omega\in\Omega$. 
	If the system $g_i(x,\omega)\leqslant 0, i=1,dots, m$, is regular, then the set-value map 
	$F\, (\cdot): \Omega \rightrightarrows {\cal H}$ is defined by 
	$F\, (\omega^{\prime}) = \{x\in {\cal H}\mid g_i(x, \omega^{\prime})\leqslant 0,\; i=1,\ldots, m\}$ is lower semicontinuous at $\omega$.
\end{lemma}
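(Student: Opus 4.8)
The plan is to verify lower semicontinuity directly from the open-set definition, exploiting the two features that distinguish the regular (Slater) case: the convexity of each $g_i(\cdot,\omega)$, which holds because $T_i$ is positive semidefinite, together with the existence of a strictly feasible point, and the elementary fact that for a \emph{fixed} $x$ the map $\omega'\mapsto g_i(x,\omega')$ is continuous at $\omega$. The guiding idea is that near any feasible $\bar x\in F(\omega)$ one can exhibit a single point $\hat x$ that is \emph{strictly} feasible for $\omega$ and still lies in the prescribed open set; strict feasibility is an open condition on $\omega'$, so this one point remains feasible for all $\omega'$ close to $\omega$.

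First I would fix an open set $V\subset{\cal H}$ with $F(\omega)\cap V\neq\emptyset$ and choose $\bar x\in F(\omega)\cap V$. Since $V$ is open, there is $r>0$ with $\{x\in{\cal H}:\Vert x-\bar x\Vert<r\}\subset V$. Let $x^0$ be a point given by regularity, so that $g_i(x^0,\omega)<0$ for all $i$. For $t\in(0,1]$ set $x_t=(1-t)\bar x+tx^0$, whence $\Vert x_t-\bar x\Vert=t\Vert x^0-\bar x\Vert$. By convexity of $g_i(\cdot,\omega)$ and $g_i(\bar x,\omega)\leqslant0$,
\begin{equation*}
g_i(x_t,\omega)\leqslant(1-t)g_i(\bar x,\omega)+t\,g_i(x^0,\omega)\leqslant t\,g_i(x^0,\omega)<0,\qquad i=1,\dots,m.
\end{equation*}
Choosing $t$ small enough that $t\Vert x^0-\bar x\Vert<r$ and then fixing it, I obtain a single point $\hat x:=x_t$ with $\hat x\in V$ and $g_i(\hat x,\omega)<0$ for every $i$.

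It remains to show that $\hat x$ stays feasible under small perturbations of $\omega$. For the fixed vector $\hat x$ the estimate
\begin{equation*}
\vert g_i(\hat x,\omega')-g_i(\hat x,\omega)\vert\leqslant\tfrac12\Vert T_i'-T_i\Vert\,\Vert\hat x\Vert^2+\Vert c_i'-c_i\Vert\,\Vert\hat x\Vert+\vert\alpha_i'-\alpha_i\vert,
\end{equation*}
which follows from the Cauchy--Schwarz inequality and the definition of the operator norm, shows that $\omega'\mapsto g_i(\hat x,\omega')$ is continuous at $\omega$. Setting $\delta:=\min_i\bigl(-g_i(\hat x,\omega)\bigr)>0$, I can find $\varepsilon>0$ such that $\vert g_i(\hat x,\omega')-g_i(\hat x,\omega)\vert<\delta$ for all $i$ whenever $\Vert\omega'-\omega\Vert<\varepsilon$; hence $g_i(\hat x,\omega')<0$, i.e. $\hat x\in F(\omega')\cap V$. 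Thus $F(\omega')\cap V\neq\emptyset$ for every such $\omega'$, which is precisely lower semicontinuity of $F(\cdot)$ at $\omega$.

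I do not expect a serious obstacle in this argument: the decisive move is using the regularity point to pass from the boundary point $\bar x$ to a strictly interior feasible point $\hat x\in V$, after which everything is elementary. The only step requiring care is that $\hat x$ must be selected \emph{once and for all}, so that no diagonal or limiting construction is needed, and it is worth stressing that the infinite dimension of ${\cal H}$ plays no role here, since the whole verification is carried out in the norm topology for the single fixed vector $\hat x$. Finally, regularity is genuinely essential: without a strictly feasible point lower semicontinuity can fail, for instance when the feasible set degenerates to a single point that can be destroyed by arbitrarily small perturbations of the data.
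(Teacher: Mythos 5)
Your proof is correct. There is, however, nothing in the paper to compare it against step by step, because the paper does not prove this lemma at all: it is quoted as Theorem 5.1 of \cite{Dinh}, an external stability result for feasible sets of parametric convex systems. Your argument is the natural self-contained alternative, and it is sound: starting from $\bar x\in F(\omega)\cap V$, you slide toward the Slater point $x^0$ along the segment $x_t=(1-t)\bar x+t x^0$, use convexity of $g_i(\cdot,\omega)$ (valid since each $T_i$ is positive semidefinite) to get $g_i(x_t,\omega)\leqslant t\,g_i(x^0,\omega)<0$, and choose $t$ small enough that $\hat x:=x_t$ stays in $V$; then, since $\hat x$ is fixed once and for all, the estimate $\vert g_i(\hat x,\omega')-g_i(\hat x,\omega)\vert\leqslant\tfrac12\Vert T_i'-T_i\Vert\,\Vert\hat x\Vert^2+\Vert c_i'-c_i\Vert\,\Vert\hat x\Vert+\vert\alpha_i'-\alpha_i\vert$, combined with the max-norm on $\Omega$, shows that strict feasibility of $\hat x$ survives all sufficiently small perturbations of $\omega$, so $F(\omega')\cap V\neq\emptyset$. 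This matches the paper's definition of lower semicontinuity exactly, and fixing $\hat x$ in advance indeed removes any need for a limiting or diagonal construction. What your route buys is transparency and self-containedness: it isolates the three ingredients actually needed (convexity of the constraints, one strictly feasible point, and continuity of $\omega'\mapsto g_i(x,\omega')$ for each fixed $x$), and it works verbatim in any Hilbert space since neither compactness nor the weak topology enters. What the paper's citation buys is generality: the theorem of \cite{Dinh} covers far broader classes of parametric convex systems than the finitely many quadratic constraints treated here, at the cost of leaving this step of the paper not self-contained.
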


\begin{remark}\label{remark:lscF}
	If the inequality system $g_i(x, \omega)\leqslant 0, i=1,\ldots, m$, is irregular, then there exists a sequence $\{\omega^k\}$ in $\Omega$ converging to $\omega$ such that, for every $k$, the system $g_i(x, \omega^k)\leqslant 0,\; i=1,\ldots, m$, has no solution. 
\end{remark}

\begin{lemma}\label{le:converge}
	Consider  the parametric quadratic function on Hilbert space $\cal H$
		$$h(x, u):=\frac{1}{2}\langle x, T x\rangle + \langle c, x\rangle + \alpha,$$
	where $\langle x, T x\rangle$ is weakly lower semicontinuous, $c\in {\cal H}$, $\alpha\in \mathbb R$ and\\ $u= (T, c, \alpha)\in \Omega_1:= {\cal L}({\cal H})\times {\cal H}\times {\mathbb R}$. 
	
	Suppose that  there exists a sequence
	$\{u^k\}=\{(T^k, c^k, \alpha^k)\}$ in $\Omega_1$  converging to $u$ 
	and $x^k$ weakly converges to $\bar x$ as $k\to\infty$. Then, %the following are valid:
	\begin{itemize}
		\item [{\rm (a)}]
		$\underset{k\to\infty}{\liminf}\,h(x^k, u^k)\geqslant h(\bar x, u)$;
		\item [{\rm (b)}] If $\langle x^k, T^k x^k\rangle\to\langle \bar x,  T \bar x\rangle$ as $k\to\infty$ then $\langle x^k,  T x^k\rangle\to\langle \bar x,  T \bar x\rangle$  as $k\to\infty$;
		\item [{\rm (c)}] If $T^k x^k = 0,\; \langle c^k, x^k\rangle \leqslant 0$ and $\langle x,  T x\rangle$ is nonnegative,  then $$T \bar x = 0, \langle \bar c, \bar x\rangle \leqslant 0.$$
	\end{itemize}
\end{lemma}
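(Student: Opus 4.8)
The three parts run on a single engine, so the plan is to set that up once. Since $x^k$ weakly converges it is norm-bounded, say $\Vert x^k\Vert\leqslant M$ for all $k$ (uniform boundedness), while $\Vert T^k - T\Vert\to 0$, $\Vert c^k - c\Vert\to 0$, $\alpha^k\to\alpha$ because $u^k\to u$ in $\Omega_1$. First I would show the parameter perturbation is negligible. Writing $\langle x^k, T^k x^k\rangle = \langle x^k, T x^k\rangle + \langle x^k, (T^k-T)x^k\rangle$, Cauchy--Schwarz and the operator-norm bound give $|\langle x^k,(T^k-T)x^k\rangle|\leqslant \Vert T^k-T\Vert\,\Vert x^k\Vert^2\leqslant M^2\Vert T^k-T\Vert\to 0$. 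Similarly $|\langle c^k,x^k\rangle - \langle c,x^k\rangle|\leqslant \Vert c^k-c\Vert\, M\to 0$, and since $x^k\rightharpoonup\bar x$ one has $\langle c,x^k\rangle\to\langle c,\bar x\rangle$; combining, $\langle c^k,x^k\rangle\to\langle c,\bar x\rangle$. These two facts reduce every assertion to a statement about the \emph{fixed} operator $T$ and fixed data $(c,\alpha)$.

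For part (a), the reduction yields $h(x^k,u^k)=\tfrac12\langle x^k,T x^k\rangle + \langle c,\bar x\rangle + \alpha + o(1)$, so taking $\liminf$ and using weak lower semicontinuity of $\langle x,T x\rangle$ gives $\liminf_k h(x^k,u^k)=\langle c,\bar x\rangle+\alpha+\liminf_k\tfrac12\langle x^k,Tx^k\rangle\geqslant \tfrac12\langle\bar x,T\bar x\rangle+\langle c,\bar x\rangle+\alpha=h(\bar x,u)$. Part (b) is then immediate from the same perturbation estimate: $\langle x^k,T x^k\rangle=\langle x^k,T^k x^k\rangle-\langle x^k,(T^k-T)x^k\rangle$, where the first term tends to $\langle\bar x,T\bar x\rangle$ by hypothesis and the second to $0$, so $\langle x^k,T x^k\rangle\to\langle\bar x,T\bar x\rangle$.

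For part (c) (reading $\bar c$ as the limit $c$), the hypothesis $T^k x^k=0$ forces $\langle x^k,T^k x^k\rangle=0$, so by the perturbation estimate $\langle x^k,T x^k\rangle\to 0$. Weak lower semicontinuity gives $\langle\bar x,T\bar x\rangle\leqslant\liminf_k\langle x^k,T x^k\rangle=0$, while nonnegativity of the form gives $\langle\bar x,T\bar x\rangle\geqslant 0$; hence $\langle\bar x,T\bar x\rangle=0$. The inequality $\langle c,\bar x\rangle\leqslant 0$ drops out by passing to the limit in $\langle c^k,x^k\rangle\leqslant 0$ using $\langle c^k,x^k\rangle\to\langle c,\bar x\rangle$ from the first paragraph.

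The main obstacle is the final implication in (c): upgrading the scalar identity $\langle\bar x,T\bar x\rangle=0$ to the operator equation $T\bar x=0$. Here I would use that a nonnegative self-adjoint $T$ defines a positive semidefinite bilinear form, so the generalized Cauchy--Schwarz inequality $|\langle y,T\bar x\rangle|^2\leqslant\langle y,Ty\rangle\,\langle\bar x,T\bar x\rangle$ holds for every $y\in{\cal H}$; since the right-hand side vanishes, $\langle y,T\bar x\rangle=0$ for all $y$, and therefore $T\bar x=0$. This Cauchy--Schwarz step is precisely where nonnegativity of the quadratic form is indispensable, and it is what converts the scalar information into the desired operator conclusion.
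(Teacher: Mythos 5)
Your proof is correct, and for parts (a) and (b) it is essentially identical to the paper's: the same decomposition $\langle x^k, T^k x^k\rangle = \langle x^k, T x^k\rangle + \langle x^k,(T^k-T)x^k\rangle$, the same operator-norm Cauchy--Schwarz bound $\vert\langle x^k,(T^k-T)x^k\rangle\vert\leqslant \Vert T^k-T\Vert\,\Vert x^k\Vert^2\to 0$ (using boundedness of the weakly convergent sequence), and weak lower semicontinuity of $\langle x, Tx\rangle$ to close the argument. The one genuine divergence is the last step of (c), upgrading $\langle \bar x, T\bar x\rangle=0$ to $T\bar x=0$: the paper invokes the Fermat rule, i.e.\ $\bar x$ is a global minimizer of the nonnegative smooth quadratic form $x\mapsto\langle x,Tx\rangle$, so its gradient $2T\bar x$ (self-adjointness used here) must vanish; you instead apply the generalized Cauchy--Schwarz inequality $\vert\langle y,T\bar x\rangle\vert^2\leqslant\langle y,Ty\rangle\,\langle\bar x,T\bar x\rangle$ for the positive semidefinite symmetric bilinear form and conclude $\langle y,T\bar x\rangle=0$ for all $y$. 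Both are valid and both quietly rely on $T$ being self-adjoint (the paper's standing convention); your route is purely algebraic and self-contained, whereas the paper's is a one-line appeal to first-order optimality. Your reading of the statement's $\bar c$ as the limit $c$ is also the intended one.
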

\begin{proof}
	${\rm (a)}$ 
	Let us first prove  that $\underset{k\to\infty}{\liminf}\langle x^k, T^k x^k\rangle\geqslant \langle \bar x, T\bar x\rangle$. We have 
	\begin{align}\label{eq:lema_1a}
	\langle x^k, T^k  x^k \rangle - \langle \bar x, T \bar x\rangle 
	= \langle x^k, (T^k -T) x^k\rangle+ [\langle x^k,   T x^k\rangle -\langle \bar x,  T \bar x\rangle].
	\end{align}
	Since  $\Vert T^k -  T\Vert \to 0$ as $k\to \infty$ and since $\Vert x^k\Vert$ is bounded, by  the Cauchy-Schwarz inequality (see \cite[p. 29]{Bauschke2011})  we see that 
	\begin{align}\label{eq:lema_1b}
	\vert\langle x^k, (T^k - T) x^k\rangle\vert \leqslant \Vert x^k\Vert^2\Vert T^k- T\Vert \to 0 \mbox{ as } k\to\infty.
	\end{align}
	By \eqref{eq:lema_1a}, \eqref{eq:lema_1b} and the weakly lower semicontinuity of $\langle x, T x\rangle$, one has
	\begin{align}\label{eq:lema_1c}
	\underset{k\to\infty}{\liminf}\langle x^k, T^k x^k\rangle\geqslant \langle \bar x,  T\bar x\rangle.
	\end{align}
	Combining \eqref{eq:lema_1c}  with $\underset{k\to\infty}{\lim}(\langle c^k, x\rangle + \alpha^k)= \langle c, x\rangle +  \alpha \mbox{ as } k\to\infty$ we can assert that $\underset{k\to\infty}{\liminf}\,h(u^k, x^k)\geqslant h( u, \bar x).$
	
	${\rm (b)}$ Since $\langle x^k,  T x^k\rangle-\langle \bar x,  T \bar x\rangle = \langle x^k, ( T-T^k) x^k\rangle + [\langle x^k, T^k x^k\rangle-\langle \bar x,   T\bar x],$
	it follows that
	\begin{align}\label{eq:lema_1d}
	\Vert\langle  x^k, T x^k\rangle-\langle \bar x,   T\bar x\rangle\Vert \leqslant \Vert\langle x^k, (T-T^k) x^k\rangle\Vert+\Vert  \langle x^k, T^k x^k\rangle-\langle \bar x,  T\bar x)\Vert.
	\end{align}
	From \eqref{eq:lema_1b}, \eqref{eq:lema_1d} and the assumption $\langle x^k, T^k x^k\rangle\to\langle \bar x, T \bar x\rangle$ as $k\to \infty$, it may be concluded that  $\langle x^k,  T x^k\rangle\to\langle x,  T x\rangle$  as $k\to\infty$.
	
	${\rm (c)}$ Obviously, $\langle  c, \bar x\rangle = \underset{k\to\infty}{\lim}\langle c^k, x^k\rangle \leqslant 0$.  By the assumption $T^k x^k =0$ and  $\langle x,  T x\rangle$ is nonnegative, 	it follows from \eqref{eq:lema_1c},  that  
	$ \langle \bar x, T \bar x\rangle = 0$.  By the well-known Fermat Rule, we obtain that   $ T \bar x=0$.  The proof is complete. \qed
	 \end{proof}
\begin{lemma}\label{pro:open}
	Consider the problem \eqref{eq:QP_w}, where 
	$\langle x, Tx\rangle$ is a Legendre form.
	Then, set
		$$K:=\big\{(T, T_1,\ldots, T_m , c_1, \ldots, c_m)\in {\cal L(H)}^{m+1}\times {{\cal H}^m}\mid {\rm Sol}\eqref{eq:P_0} =\{0\}\big\}$$
	is open  in  ${\cal L(H)}^{m+1}\times {{\cal H}^m}$.
\end{lemma}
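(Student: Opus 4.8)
The plan is to argue by contradiction, after first reducing the defining condition of $K$ to a definiteness property of the quadratic form on the recession cone. Recall that $0^{+}F(\omega)=\{v\in\mathcal H:\ T_iv=0,\ \langle c_i,v\rangle\leqslant 0,\ i=1,\dots,m\}$ is a cone and that $v\mapsto\frac12\langle v,Tv\rangle$ is positively homogeneous of degree two. Hence the infimum in \eqref{eq:P_0} is either $0$ (attained at $v=0$) or $-\infty$, and therefore
\[
{\rm Sol}\eqref{eq:P_0}=\{0\}\ \Longleftrightarrow\ \langle v,Tv\rangle>0\ \ \text{for every } v\in 0^{+}F(\omega)\setminus\{0\}.
\]
In particular, whenever a parameter $\omega'$ lies \emph{outside} $K$ there exists $v\in 0^{+}F(\omega')\setminus\{0\}$ with $\langle v,Tv\rangle\leqslant 0$ for the corresponding $T$-component.

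Now suppose, contrary to the claim, that some $\bar\omega'=(\bar T,\bar T_1,\dots,\bar T_m,\bar c_1,\dots,\bar c_m)\in K$ is not an interior point. Then I would choose a sequence $\omega'^{k}=(T^{k},T_1^{k},\dots,T_m^{k},c_1^{k},\dots,c_m^{k})\to\bar\omega'$ with $\omega'^{k}\notin K$ for all $k$, and, by the reduction above, pick $v^{k}\in 0^{+}F(\omega'^{k})\setminus\{0\}$ with $\langle v^{k},T^{k}v^{k}\rangle\leqslant 0$. Normalising so that $\Vert v^{k}\Vert=1$ (legitimate by homogeneity) and using the weak sequential compactness of the closed unit ball of $\mathcal H$, I may pass to a subsequence with $v^{k}\rightharpoonup\bar v$.

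Next I would collect the limit information from Lemma \ref{le:converge}. Since $T_i^{k}v^{k}=0$ and $\langle c_i^{k},v^{k}\rangle\leqslant 0$, and since each $\bar T_i$ is positive semidefinite (being the operator-norm limit of the positive semidefinite $T_i^{k}$), part (c) gives $\bar T_i\bar v=0$ and $\langle\bar c_i,\bar v\rangle\leqslant 0$ for every $i$; that is, $\bar v\in 0^{+}F(\bar\omega')$. Applying part (a) to the quadratic form $\frac12\langle x,Tx\rangle$ yields $\langle\bar v,\bar T\bar v\rangle\leqslant\liminf_{k}\langle v^{k},T^{k}v^{k}\rangle\leqslant 0$.

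The decisive step, and the only place where the Legendre hypothesis on $\bar T$ is used, is to show $\bar v\neq 0$; in infinite dimensions a weak limit of unit vectors may well vanish, so this cannot be taken for granted. If $\bar v=0$, then $\langle\bar v,\bar T\bar v\rangle=0$, and since $\langle v^{k},T^{k}v^{k}\rangle\leqslant 0$ while part (a) gives $\liminf_{k}\langle v^{k},T^{k}v^{k}\rangle\geqslant 0$, we obtain $\langle v^{k},T^{k}v^{k}\rangle\to 0=\langle\bar v,\bar T\bar v\rangle$. Part (b) of Lemma \ref{le:converge} then upgrades this to $\langle v^{k},\bar T v^{k}\rangle\to\langle\bar v,\bar T\bar v\rangle$, whereupon the Legendre property of $\langle\cdot,\bar T\cdot\rangle$ together with $v^{k}\rightharpoonup\bar v$ forces $v^{k}\to\bar v=0$ strongly, contradicting $\Vert v^{k}\Vert=1$. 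Hence $\bar v\neq 0$. Combining the three facts $\bar v\in 0^{+}F(\bar\omega')$, $\bar v\neq 0$ and $\langle\bar v,\bar T\bar v\rangle\leqslant 0$ contradicts the characterisation of membership in $K$ established in the first paragraph, so $\bar\omega'$ must be interior and $K$ is open. I expect the verification that $\bar v\neq 0$ to be the main obstacle; the remaining passages to the weak limit are exactly what Lemma \ref{le:converge} was designed to supply.
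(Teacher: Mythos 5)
Your proposal is correct and follows essentially the same route as the paper's own proof: contradiction via a sequence outside $K$ converging to a point of $K$, normalization and weak compactness, Lemma \ref{le:converge} (parts (a), (b), (c)) to pass to the limit, and the Legendre property to rule out $\bar v=0$. The only (welcome) addition is that you make explicit the equivalence ${\rm Sol}\eqref{eq:P_0}=\{0\}\Leftrightarrow\langle v,Tv\rangle>0$ on $0^{+}F(\omega)\setminus\{0\}$, which the paper uses implicitly when extracting the vectors $v^k$.
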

\begin{proof}
	
	On the contrary, suppose that $K$ is not open. Then, there exists a sequence
		$$\{(T^k, T^k_1,\ldots,T^k_m, c^k_1,\ldots,c^k_m)\}\subset {\cal L(H)}^{m+1}\times {{\cal H}^m}$$
	converging to $(T, T_1,\ldots, T_m, c_1,\ldots, c_m)\in K$  and a sequence   $v^k\in {\cal H}, \Vert v^k\Vert \neq 0$ such that
	\begin{align}\label{eq:open2}
	T_i^k v^k = 0, \;\;\langle c_i^k, v^k\rangle \leqslant 0, \;\;i=1,\ldots, m,
	\; \langle v^k,  T^k v^k\rangle\leqslant 0.
	\end{align}
	Let $h^k :=\frac{v^k}{\Vert v^k\Vert}$.
	By taking a subsequence, if necessary, we can assume that $h^k$ weakly converges to $\overline{v}$. It follows  from Lemma \ref{le:converge} that $ \overline{v}\in 0^+F(\omega)$. 
	
	Dividing both sides of  the inequalities 
	$\langle v^k, T^k v^k\rangle \leqslant 0$ in \eqref{eq:open2} by  $\Vert v^k\Vert^2$ and letting  $k\to\infty$, one has
	$$\underset{k\to\infty}{\limsup}\langle v^k,  T^k v^k \rangle \leqslant 0.$$
	Combining this with Lemma \ref{le:converge} we deduce that
	\begin{align}\label{eq:open_2a}
	&\langle \bar{v}, T \bar{v}\rangle\leqslant \underset{k\to\infty}{\liminf}\langle v^k,  T^k v^k \rangle\leqslant \underset{k\to\infty}{\limsup}\langle v^k,  T^k v^k \rangle = 0.
	\end{align}
	We next claim  that  $\bar v \neq 0$. Indeed, if $\bar v = 0$, then it follows from \eqref{eq:open_2a} and Lemma \ref{le:converge} that $ \langle v^k,  T v^k\rangle \to  \langle \bar v,T \bar v\rangle$ as $k\to \infty$.  Since $\langle x, T x\rangle$ is a Legendre form, and since  $v^k\rightharpoonup \bar v$, $ \langle v^k,  T v^k\rangle \to  \langle \bar v,T \bar v\rangle$ as $k\to \infty$, by Definition \ref{de:Legendre} we deduce that  $v^k$  converges to $\bar v$ and $\bar v\neq 0$,  a contradiction.
	
	We have thus proved that there exists
	$\overline{v}\in 0^+F(\omega)\backslash \{0\}$ such that
	$\langle \overline{v}, T \overline{v}\rangle \leqslant 0$. This contradicts the assumption that ${\rm Sol}\eqref{eq:P_0} = \{0\}$. Hence $K$ is open. The proof is complete. \qed
\end{proof}
The next example shows that the assumption on the Legendre property of the quadratic form cannot be dropped from the assumption of Lemma \ref{pro:open}.
\begin{example}\label{exam:not_open}\rm Consider the programming problem
	\begin{align}\label{eq:not_open}
	\begin{cases}
	&\min  f(x, \omega)= \frac{1}{2}\langle x, T x\rangle +\langle c, x\rangle \\
	&\mbox{ s. t. } x=(x_1, x_2,\ldots)\in \ell^2, \, g_1(x, \omega) = \frac{1}{2}\langle x, T_1 x\rangle +\langle c_1, x\rangle+\alpha_1\leqslant  0,
	\end{cases}
	\end{align}	
	where $T= 0$, $c= 0$, $T_1: \ell^2\to\ell^2$ is defined by $T_1x = (x_1, \frac{x_2}{2^2}, \ldots, \frac{x_n}{n^n},\ldots)$, $c_1 = 0$, $\alpha_1 = -1$ and $\omega = (T, c, T_1, c_1, \alpha_1)$. 
	
	Let 
		$$F(\omega) = \{x\in \ell^2\mid g_1(x, \omega) = \frac{1}{2}\langle x, T_1 x\rangle+ \langle c_1, x\rangle+\alpha_1\leqslant  0\}$$
	and let
		$$K= \{(T, T_1, c_1)\in {\cal L}(\ell^2)^2\times {\ell^2} \mid {\rm Sol}(\eqref{eq:P_0}) = \{0\}\}.$$
		
	It is easily seen that $F(\omega)$ is a nonempty set and $\langle x, T x\rangle= \langle x, 0 x\rangle$
	is not a Legendre form. 
	
	The quadratic form associated with $T_1$ given by $\langle x, T_1 x\rangle = \sum\limits_{n=1}^{\infty}\frac{x_n^2}{n^n}$.
		Since $\langle x, T_1 x\rangle =\sum\limits_{n=1}^{\infty}\frac{x_n^2}{n^n} = 0$ if and only if $x=(0,\ldots, 0,\ldots)$, it follows that 
		$$0^+F(\omega) = \{v\in \ell^2\mid T_1 v = 0\}=\{0\}.$$
		 Hence ${\rm Sol}\eqref{eq:P_0} =\{0\}$.
	
	Let $T_1^n = T_1- \frac{1}{n^n}I$, where $I$ is the identity operator on $\ell^2$. 
	
	Since $\Vert T_1- T_1^n\Vert = \frac{1}{n^n}$, we have $\Vert (T, T_1^n) - (T, T_1)\Vert = \Vert T_1^n-T_1\Vert \to 0$ as $n\to\infty$. 
		It is clear that $T^n\in \ell^2$ and
		$$T_1^n x=\Big(x_1+\ldots (\frac{1}{n^{n-1}}- \frac{1}{n^n})x_{n-1}+ 0+ (\frac{1}{n^{n+1}}- \frac{1}{n^n} )x_{n+1}+\ldots\Big) \mbox{ \, for all } n.$$ 
		Note that if $\, \overline{v}^n = (0, 0,\ldots, v_n, 0,\ldots), v_n\neq 0$, then $ T_1^n \overline{v}^n = 0$. Therefore ${\rm Sol}(QPR_{\omega^n})\neq \{0\}$. 
		
	We have shown that there exists a sequence $\{(T, T_1^n, c_1)\}\subset {\cal L}(\ell^2)^2\times \ell^2$ converging to $(T,  T_1, c_1)\in K$ such that ${\rm Sol}(QPR_{\omega^n})\neq \{0\}$. Hence $K$ is not open.
\end{example}

A sufficient condition for the upper semicontinuity
of $\rm Sol( \cdot )$ is as follows.

\begin{theorem}\label{theo:usc}
	Consider the problem \eqref{eq:QP_w} where $\langle x, T x\rangle$ is a Legendre form. Then,  the multifunction ${\rm Sol}(\cdot)$ is usc at $\omega$ if  the following two conditions are satisfied:
	\begin{itemize}
		\item [{\rm (i)}]  ${\rm Sol}\eqref{eq:P_0} =\{0\}$;
		\item [{\rm (ii)}] The system $g_i(x, \omega)\leqslant 0, i=1, \dots, m,$ is regular.
	\end{itemize}
\end{theorem}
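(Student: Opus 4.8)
The plan is to argue by contradiction, using that regularity (condition (ii)) forces $F(\omega)\neq\emptyset$ and, via Lemma \ref{pro-lscF}, makes $F(\cdot)$ lower semicontinuous at $\omega$, while condition (i) together with Lemma \ref{theo:sol_nonemp} guarantees that ${\rm Sol}(\omega)$ is nonempty, closed and bounded. Suppose ${\rm Sol}(\cdot)$ is not usc at $\omega$. Then there exist an open set $V\supset{\rm Sol}(\omega)$, a sequence $\omega^k\to\omega$, and points $x^k\in{\rm Sol}(\omega^k)$ with $x^k\notin V$ for every $k$. I would split the analysis according to whether $\{x^k\}$ is bounded.

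In the bounded case, pass to a subsequence with $x^k\rightharpoonup\bar x$. First verify $\bar x\in F(\omega)$: applying Lemma \ref{le:converge}(a) to each $g_i$ gives $g_i(\bar x,\omega)\leqslant\liminf_k g_i(x^k,\omega^k)\leqslant 0$. Next show $\bar x\in{\rm Sol}(\omega)$: for arbitrary $\hat x\in F(\omega)$, lower semicontinuity of $F$ yields $z^k\in F(\omega^k)$ with $z^k\to\hat x$, so the optimality of $x^k$ and the continuity of $f$ give $f(\bar x,\omega)\leqslant\liminf_k f(x^k,\omega^k)\leqslant\lim_k f(z^k,\omega^k)=f(\hat x,\omega)$. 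Taking $\hat x=\bar x$ forces $\lim_k f(x^k,\omega^k)=f(\bar x,\omega)$, and subtracting the convergent linear part yields $\langle x^k,T^k x^k\rangle\to\langle\bar x,T\bar x\rangle$. Lemma \ref{le:converge}(b) then gives $\langle x^k,Tx^k\rangle\to\langle\bar x,T\bar x\rangle$, and the Legendre property (Definition \ref{de:Legendre}) upgrades $x^k\rightharpoonup\bar x$ to norm convergence $x^k\to\bar x$. Since $\bar x\in{\rm Sol}(\omega)\subset V$ with $V$ open, $x^k\in V$ for large $k$, contradicting $x^k\notin V$.

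In the unbounded case, pass to a subsequence with $\Vert x^k\Vert\to\infty$ and $v^k:=\Vert x^k\Vert^{-1}x^k\rightharpoonup\bar v$. Dividing each constraint $g_i(x^k,\omega^k)\leqslant 0$ by $\Vert x^k\Vert^2$ (the linear and constant terms vanish) gives $\limsup_k\langle v^k,T_i^k v^k\rangle\leqslant 0$, whereas Lemma \ref{le:converge}(a) gives $\langle\bar v,T_i\bar v\rangle\leqslant\liminf_k\langle v^k,T_i^k v^k\rangle$; since $T_i$ is positive semidefinite this forces $T_i\bar v=0$, and using the positive semidefiniteness of $T_i^k$ (so that $\langle x^k,T_i^k x^k\rangle\geqslant 0$) one extracts $\langle c_i^k,x^k\rangle+\alpha_i^k\leqslant 0$ and, dividing by $\Vert x^k\Vert$, obtains $\langle c_i,\bar v\rangle\leqslant 0$; hence $\bar v\in 0^+F(\omega)$, the parametric analogue of Lemma \ref{remark1}. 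Dividing the objective inequality $f(x^k,\omega^k)\leqslant f(z^k,\omega^k)$ by $\Vert x^k\Vert^2$ likewise gives $\langle\bar v,T\bar v\rangle\leqslant 0$. The decisive step is $\bar v\neq 0$: if $\bar v=0$ then $\langle v^k,T^k v^k\rangle\to 0=\langle\bar v,T\bar v\rangle$, so Lemma \ref{le:converge}(b) and Definition \ref{de:Legendre} force $v^k\to 0$ in norm, contradicting $\Vert v^k\Vert=1$. Thus $\bar v\in 0^+F(\omega)\setminus\{0\}$ with $\langle\bar v,T\bar v\rangle\leqslant 0$, contradicting ${\rm Sol}\eqref{eq:P_0}=\{0\}$.

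I expect the main obstacle to be the passage from weak to strong convergence, which is precisely where the Legendre hypothesis is indispensable: boundedness only yields weak subsequential limits, yet usc is formulated with norm-open sets, so one genuinely needs $x^k\to\bar x$ in norm in the bounded case and $v^k\not\to 0$ in the unbounded case. Lemma \ref{le:converge}(b), which transfers convergence of the quadratic values from the perturbed operators $T^k$ to the limit operator $T$, is the technical bridge that makes Definition \ref{de:Legendre} applicable despite the simultaneous perturbation of the operator.
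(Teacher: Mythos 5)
Your proposal is correct, and it follows the same overall route as the paper's proof: argue by contradiction, fix a feasible point and use condition (ii) with Lemma \ref{pro-lscF} to produce feasible points of the perturbed problems converging to it, then split into the bounded and unbounded cases and use Lemma \ref{le:converge} plus the Legendre property. However, your treatment is genuinely tighter than the paper's at two points, and this is worth recording. In the bounded case the paper passes to the weak limit $x^k \rightharpoonup x_0$, concludes $x_0\in {\rm Sol}(\omega)\subset V$, and declares a contradiction with $x^k\notin V$; but $V$ is norm-open, and the complement of a norm-open set, while norm-closed, need not be weakly closed (in $\ell^2$ one has $e_k\rightharpoonup 0\in B(0,\tfrac12)$ while $e_k\notin B(0,\tfrac12)$), so weak convergence alone gives no contradiction. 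Your additional step --- taking $\hat x=\bar x$ to force $f(x^k,\omega^k)\to f(\bar x,\omega)$, subtracting the convergent linear part, applying Lemma \ref{le:converge}(b), and then invoking Definition \ref{de:Legendre} to upgrade $x^k\rightharpoonup\bar x$ to norm convergence, so that $x^k$ eventually enters the open set $V$ --- is precisely what is needed to make the contradiction valid, and it is absent from the paper's argument. Likewise, in the unbounded case the paper cites Lemma \ref{remark1}, which is stated only for sequences lying in the fixed set $F(\omega)$, whereas the points $x^k$ here lie in the perturbed sets $F(\omega^k)$; your explicit derivation of the parametric analogue (via Lemma \ref{le:converge}(a), the positive semidefiniteness of $T_i$ and of the perturbed operators $T_i^k$) closes that gap as well. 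In short: same strategy, but your version supplies the weak-to-strong convergence bridge that the published proof only gestures at, and this bridge is exactly where the Legendre hypothesis does its work.
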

\begin{proof}
	Suppose that the assertion of the theorem is false.
	Then,  there exist an open set $V$ containing ${\rm Sol}(\omega)$, a sequence $\{\omega^k\}$ converging to $\omega$, a sequence $\{x^k\}$ such that
	$x^k\in {\rm Sol}(\omega^k)\backslash V\; $ for all $k$.
	Since $x^k\in {\rm Sol}(\omega^k)$, one has
	\begin{align}\label{eq:nlt_tren1}
	g_i(x^k, \omega^k)\leqslant 0,\; i=1,\ldots,m.
	\end{align}
	
	Fix any $\bar x\in F(\omega)$. By assumption ${\rm (ii)}$ and Lemma \ref{pro-lscF}, there exists a sequence 
	$\{\xi^k\},\;\; \xi^k\in F(\omega^k)$ for all $k$
	such that $\underset{k\to\infty}{\lim}\xi^k= \bar x$.
	We have
	\begin{align}\label{eq:nlt_tren2}
	\frac{1}{2}\langle x^k, T^k x^k \rangle+\langle c^k, x^k\rangle\leqslant \frac{1}{2}\langle \xi^k, T^k\xi^k\rangle+\langle c^k, \xi^k\rangle .
	\end{align}
	
	If the sequence $\{x^k\}$ is bounded, then there is no loss of generality
	in assuming that
	$x^k\rightharpoonup x_0\in {\cal H}$.
	By  passing  to  the limit in \eqref{eq:nlt_tren1} and \eqref{eq:nlt_tren2} as $k\to\infty$,   and using  Lemma \ref{le:converge},  we obtain $g_i(x_0, \omega)\leqslant 0$ and $f(x_0, \omega)\leqslant f(\overline{x}, \omega)$. Hence $x_0\in {\rm Sol}(\omega)\subset V$. We have arrived at a
	contradiction, because $x_0\notin V$ for all $k$, and $V$ is open.
	
	Suppose now that $\{x^k\}$ is unbounded. Without loss of
	generality we can assume that $\Vert x^k\Vert\neq 0$ for all $k$ and  $\Vert x^k\Vert\to\infty$ as $k\to\infty$.
	Let $v^k:=\frac{x^k}{\Vert x^k}\Vert$, we have $\Vert v^k\Vert = 1$, by taking a subsequence, if necessary, we
	can assume that $v^k\rightharpoonup v$ as $k\to\infty$.
	It follows from Lemma \ref{remark1} that
	$v\in 0^+F(\omega)$.
	
	Dividing both sides of  \eqref{eq:nlt_tren2} by $\Vert x^k\Vert^2$ and letting $k\to\infty$, by Lemma \ref{le:converge}${\rm (a)}$  we can deduce that
		$$\langle v,  T v\rangle \leqslant \underset{k\to\infty}{\lim\inf}\langle v^k, T^k v^k\rangle\leqslant 
		\underset{k\to\infty}{\lim\sup}\langle v^k, T^k v^k \rangle\leqslant 0.$$
	In the same way as in the proof of Lemma \ref{pro:open}, we obtain  $v\in 0^+F(\omega)\backslash \{0\}$ and $\langle v, Tv \rangle \leqslant 0$, a contradiction.
	The proof  is complete. \qed
\end{proof}
\begin{remark}
	Observe that neither $a)$ nor $b)$ is a necessary condition for the upper semicontinuity of the ${\rm Sol}( . )$ at a given $\omega$ (see, \cite[Example 12.1 and 12.2 ]{leetamyen05}).
\end{remark}
\begin{corollary}
	Consider the problem \eqref{eq:QP_w}, where   $\langle x, T x\rangle$   is a Legendre form. Suppose that  the system $g_i(x,\omega)\leqslant 0,\; i=1,\ldots, m$, is regular and $F(\omega)$ is bounded. Then, 
	${\rm Sol}(\cdot)$ is usc at $\omega$.
\end{corollary}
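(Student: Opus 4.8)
The plan is to reduce this corollary to Theorem \ref{theo:usc} by checking its two hypotheses. Condition (ii) of that theorem---regularity of the system $g_i(x,\omega)\leqslant 0$---is assumed outright here, so the only work is to verify condition (i), namely ${\rm Sol}\eqref{eq:P_0}=\{0\}$. Since $\langle x,Tx\rangle$ is assumed to be a Legendre form, all the structural hypotheses of Theorem \ref{theo:usc} will then be in place.

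First I would note that regularity forces $F(\omega)\neq\emptyset$: the Slater point $x^0$ with $g_i(x^0,\omega)<0$ for all $i$ belongs to $F(\omega)$. Together with the standing assumption that $F(\omega)$ is bounded, this gives a nonempty, bounded, closed, convex constraint set. The key step is then the elementary observation that a bounded set has trivial recession cone, i.e. $0^+F(\omega)=\{0\}$. Indeed, if some $v\neq 0$ lay in $0^+F(\omega)$, then fixing any $x\in F(\omega)$ the whole ray $\{x+tv : t\geqslant 0\}$ would be contained in $F(\omega)$, and $\Vert x+tv\Vert\geqslant t\Vert v\Vert-\Vert x\Vert\to\infty$ as $t\to\infty$ would contradict boundedness of $F(\omega)$. (Alternatively one may invoke the explicit description $0^+F(\omega)=\{v : T_iv=0,\ \langle c_i,v\rangle\leqslant 0\}$ recalled in the Introduction, but the ray argument is the most direct.)

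Since the feasible set of \eqref{eq:P_0} is precisely $0^+F(\omega)=\{0\}$, the single feasible point $v=0$ is trivially the unique minimizer of $\tfrac12\langle v,Tv\rangle$ over that set; hence ${\rm Sol}\eqref{eq:P_0}=\{0\}$, which is exactly hypothesis (i). Both hypotheses of Theorem \ref{theo:usc} now hold, so ${\rm Sol}(\cdot)$ is usc at $\omega$, completing the proof. I do not expect a genuine obstacle: the entire content is the passage from boundedness of $F(\omega)$ to triviality of its recession cone, after which Theorem \ref{theo:usc} does all the remaining work.
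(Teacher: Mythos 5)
Your proposal is correct and follows exactly the paper's own argument: regularity gives $F(\omega)\neq\emptyset$, boundedness of $F(\omega)$ forces $0^+F(\omega)=\{0\}$ (hence ${\rm Sol}(QPR_\omega)=\{0\}$), and Theorem \ref{theo:usc} finishes the proof. The only difference is that you spell out the ray argument for the recession-cone step, which the paper leaves implicit.
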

\begin{proof}
	Since the system $g_i(x, \omega)\leqslant 0,\; i=1,\ldots, m$, is regular, $F(\omega)\neq\emptyset$. The boundedness of $F(\omega)$ implies that $0^+F(\omega)=\{0\}$. 
	Hence ${\rm Sol}\eqref{eq:P_0} =\{0\}$, and the desired property follows from 
	Theorem \ref{theo:usc}. The proof is complete. \qed
\end{proof}

The following example shows that  the assumption on the Legendre property of the quadratic form cannot be dropped from the assumption of Theorem \ref{theo:usc}.

\begin{example}\label{exam:not_usc}\rm 
	Consider the problem
	\begin{align}\label{eq:not_usc}
	\begin{cases}
	&\min  f(x, \omega)= \frac{1}{2}\langle x, T x\rangle \\
	&\mbox{ s. t. } x=(x_1, x_2,\ldots)\in \ell^2,  \, g_1(x, \omega) =  \langle c_1, x\rangle + \alpha_1 \leqslant  0,
	\end{cases}
	\end{align}	
	where $T: \ell^2\to\ell^2$ is defined by $$Tx = (x_1, \frac{x_2}{2^2}, \ldots, \frac{x_n}{n^n},\ldots),$$ $c_1=(-1, -\frac{1}{2},\ldots, -\frac{1}{n},\ldots)$, $\alpha_1 = 1$ and $\omega = (T,  c_1, \alpha_1)$.
	
	Let $F(\omega) = \{x\in\ell^2\mid g_1(x, \omega) =  \langle c_1, x\rangle + \alpha_1 \leqslant  0\}$ and let ${\rm Sol}(\omega)$ denote the solution of \eqref{eq:not_usc}.
	
	Since $x=(1,0, \ldots)\in F(\omega)$, we have $F(\omega)$ is a nonempty set.
	
	The quadratic form $\langle x, T x\rangle = \sum\limits_{n=1}^{\infty}\frac{x_n^2}{n^n}$ is not a Legendre form and  ${\rm Sol}(\omega)=\emptyset$  (see, \cite[Example 3.3]{DongTam15b}).
	
	Let $\omega^{\varepsilon}= (T^{\varepsilon}, c_1, \alpha_1)$, where  $T^{\varepsilon} = T+\varepsilon I$, $\varepsilon >0$ and $I$ is the identity operator on $\ell^2$. Since $\Vert T^{\varepsilon}- T\Vert = \varepsilon$, we have 
		$$\Vert \omega^{\varepsilon}-\omega\Vert = \max \{\Vert T^{\varepsilon}- T\Vert, \Vert c_1-c_1\Vert, \Vert \alpha_1 - \alpha_1\Vert \} = \Vert T^{\varepsilon}- T\Vert  \to 0 \mbox{ as } \varepsilon \to 0.$$
	We  have also 
		$$\langle x, T^{\varepsilon} x\rangle = \langle x, T x\rangle +\varepsilon \langle x, I x\rangle = \langle x, T x\rangle + \varepsilon\Vert x\Vert^2\geqslant \varepsilon\Vert x\Vert^2,$$
	because $\langle x, T x\rangle\geqslant 0$. Hence  $\langle x, T^{\varepsilon} x\rangle$ is a Legendre form.
	
	Consider the problem 
	\begin{align}\label{eq:not_usc1}
	\begin{cases}
	&\min  f(x, \omega^{\varepsilon})= \frac{1}{2}\langle x, T^{\varepsilon} x\rangle \\
	&\mbox{ s. t. } x=(x_1, x_2,\ldots)\in \ell^2, \,  g_1(x,\omega^{\varepsilon}) =  \langle c_1, x\rangle + \alpha_1\leqslant  0.
	\end{cases}
	\end{align}	
	Let ${\rm Sol}(\omega^{\varepsilon})$ denote the solution of \eqref{eq:not_usc1}.
	Since $\langle x, T^{\varepsilon} x\rangle$ is nonnegative and $\langle x, T^{\varepsilon} x\rangle = 0$ if and only  if $x= 0$, we have  ${\rm Sol} (QPR_{\omega^{\varepsilon}}) =\{0\}$. It follows from Lemma \ref{theo:sol_nonemp} that   ${\rm Sol}(\omega^{\varepsilon})$ is a nonempty set for every $\varepsilon$.
	
	We have shown that there exists a sequence $\{\omega^{\varepsilon}\}$ converging to $\omega$ such that ${\rm Sol}(\omega^{\varepsilon}) \neq \emptyset$. Taking $V= \emptyset$  we get ${\rm Sol}(\omega) \subset V$ and  ${\rm Sol}(\omega^{\varepsilon}) \not\subset V$.
	Hence ${\rm Sol}(\cdot )$ is not usc at $\omega = (T, c_1, \alpha_1)$. 
\end{example}

\begin{lemma}\label{le:per_Leg}
	Suppose that  $\langle x, T x\rangle$ is a Legendre form on ${\cal H}$. 
	Then, there exists an open neighborhood $\cal U$ of $T$ in space of continuous linear operators  ${\cal L(H)}$  such that for every 
	$T^{\prime}\in{\cal U}$, $\langle x, T^{\prime}x\rangle$ is also a Legendre form. 
\end{lemma}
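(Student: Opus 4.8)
The plan is to attach to $T$ a single scalar invariant that both detects the Legendre property and changes by at most $\|T'-T\|$ under perturbation, and then choose $\mathcal U$ to be the open ball on which this invariant stays positive. The natural candidate is the bottom of the form along weakly null unit sequences,
$$\gamma(T):=\inf\Big\{\liminf_{k\to\infty}\langle u_k, T u_k\rangle : u_k\rightharpoonup 0,\ \|u_k\|=1\Big\},$$
which is in fact the bottom of the essential spectrum of $T$, although I will not need that identification. First I would show that if $\langle x,Tx\rangle$ is a Legendre form then $\gamma(T)>0$; then I would set $\mathcal U=\{T'\in{\cal L(H)}:\|T'-T\|<\gamma(T)\}$ and check that each $T'\in\mathcal U$ again yields a Legendre form. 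Since only the symmetric part of $T'$ enters $\langle x,T'x\rangle$ and $\|\tfrac12(T'+{T'}^{*})-T\|\le\|T'-T\|$ (using $T=T^{*}$), I may assume $T'$ self-adjoint throughout.

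The crux, and the main obstacle, is $\gamma(T)>0$. Weak lower semicontinuity of $\langle x,Tx\rangle$ gives $\liminf_k\langle u_k,Tu_k\rangle\ge\langle 0,T0\rangle=0$ for every weakly null $u_k$, so $\gamma(T)\ge 0$; the work is to rule out $\gamma(T)=0$. If $\gamma(T)=0$, I would, for each $n$, select a normalized weakly null sequence with $\liminf_k\langle u_k,Tu_k\rangle<1/n$ and then diagonalize to a single normalized sequence $g_n$ with $g_n\rightharpoonup 0$ and $\langle g_n,Tg_n\rangle\to 0=\langle 0,T0\rangle$; Definition \ref{de:Legendre} would then force $g_n\to 0$ strongly, contradicting $\|g_n\|=1$. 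The delicate point is the diagonal extraction: picking one term out of each sequence while preserving weak convergence to $0$. I would handle this by restricting to the separable subspace spanned by all the chosen vectors and selecting the $n$-th term so that its inner products with the first $n$ members of a fixed orthonormal basis fall below $1/n$; boundedness of $\{g_n\}$ then upgrades these coordinatewise bounds to $g_n\rightharpoonup 0$.

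Granting $\gamma(T)>0$, the perturbation step is routine. For a normalized weakly null $u_k$ and $\|T'-T\|<\gamma(T)$ one has $\langle u_k,T'u_k\rangle\ge\langle u_k,Tu_k\rangle-\|T'-T\|$, so taking $\liminf_k$ and then the infimum over all such sequences yields $\gamma(T')\ge\gamma(T)-\|T'-T\|>0$. Weak lower semicontinuity of $\langle x,T'x\rangle$ follows by expanding $x_k=x_0+h_k$ with $h_k\rightharpoonup 0$: the cross terms $\langle T'x_0,h_k\rangle$ tend to $0$, and $\liminf_k\langle h_k,T'h_k\rangle\ge 0$ (apply $\gamma(T')\ge 0$ to $h_k/\|h_k\|$ along a subsequence where $\|h_k\|$ stays bounded away from $0$, the remaining case $\|h_k\|\to 0$ being trivial). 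For the second Legendre property, if $x_k\rightharpoonup x_0$ and $\langle x_k,T'x_k\rangle\to\langle x_0,T'x_0\rangle$, the same expansion gives $\langle h_k,T'h_k\rangle\to 0$; were $\limsup_k\|h_k\|=s>0$, the normalized sequence $h_k/\|h_k\|$ would contradict $\gamma(T')>0$, so $h_k\to 0$, i.e. $x_k\to x_0$. Thus $\langle x,T'x\rangle$ is Legendre for every $T'\in\mathcal U$.

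For context, the statement is also transparent through the classical characterization (implicit in \cite{Hestenes1}) that $\langle x,Tx\rangle$ is a Legendre form precisely when $T=A+C$ with $A$ self-adjoint and coercive ($\langle x,Ax\rangle\ge\beta\|x\|^{2}$, $\beta>0$) and $C$ compact self-adjoint: then $T'=(A+(T'-T))+C$ exhibits $T'$ in the same form as soon as $\|T'-T\|<\beta$, because $A+(T'-T)$ remains coercive and $C$ remains compact. I would keep the self-contained argument above as the primary proof and mention this decomposition only as motivation, since it rests on the nontrivial necessity direction of the characterization, which is exactly what the invariant $\gamma(T)$ allows me to bypass.
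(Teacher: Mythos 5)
Your argument is correct, but it takes a genuinely different route from the paper's. The paper's proof is a three-line perturbation argument resting on \cite[Proposition 3.79]{BS00}: since $\langle x,Tx\rangle$ is Legendre it decomposes as $\langle x,T_1x\rangle+\langle x,T_2x\rangle$ with $\langle x,T_1x\rangle$ elliptic (say $\geqslant \alpha\Vert x\Vert^2$) and $\langle x,T_2x\rangle$ of finite rank; for $\Vert T'-T\Vert<\varepsilon<\alpha$ it writes $\langle x,T'x\rangle=\bigl(\langle x,T'x\rangle-\langle x,T_2x\rangle\bigr)+\langle x,T_2x\rangle$, notes that the first summand is elliptic with constant $\alpha-\varepsilon$, and cites the same proposition again to conclude. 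That is precisely the ``classical characterization'' you relegate to your closing paragraph as motivation. Your primary argument instead builds everything from Definition \ref{de:Legendre}: you introduce the invariant $\gamma(T)$ (infimum of $\liminf_k\langle u_k,Tu_k\rangle$ over normalized weakly null sequences), prove that the Legendre property is equivalent to $\gamma(T)>0$, and exploit the one-line Lipschitz estimate $\gamma(T')\geqslant\gamma(T)-\Vert T'-T\Vert$ together with your reduction to the self-adjoint part of $T'$. What your route buys: it is self-contained (no appeal to the nontrivial decomposition theorem of \cite{BS00}, whose necessity direction is exactly what $\gamma$ circumvents), it produces an explicit admissible radius $\gamma(T)$ for the neighborhood, and it establishes along the way the equivalence ``Legendre $\Leftrightarrow\gamma>0$,'' which is of independent use. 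What the paper's route buys is brevity, at the cost of leaning on the cited structure theorem. One small patch you should make: in the diagonal extraction, the separable subspace must be the closed linear span of \emph{all terms of all the sequences} $u^{(n)}_k$ (a fixed countable family), not the span of ``the chosen vectors,'' since the selection of the $n$-th term uses a fixed orthonormal basis of that subspace; as worded there is a circularity, though the repair is immediate and the rest of the argument (coordinatewise smallness plus boundedness gives $g_n\rightharpoonup 0$, weak lower semicontinuity turns $\langle g_n,Tg_n\rangle<1/n$ into $\langle g_n,Tg_n\rangle\to 0$, and the Legendre property then forces $g_n\to 0$, contradicting $\Vert g_n\Vert=1$) goes through verbatim.
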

\begin{proof}
	Since $\langle x, T x\rangle$ is a Legendre form, there exist  an elliptic form $\langle x, T_1x\rangle$   and a quadratic form of finite rank $\langle x, T_2 x\rangle$  such that
	$\langle x, T x\rangle = \langle x, T_1x\rangle + \langle x, T_2 x\rangle$ (see \cite[Proposition 3.79]{BS00}). 
	
	 Let $\alpha$ be a  positive number  such that 
	$\langle x, T_1x\rangle \geqslant \alpha \Vert x\Vert^2,\, \forall x\in {\cal H}.$
	Choose $\varepsilon > 0$ so that $\varepsilon < \alpha$.
	Define ${\cal U }:= \{T^{\prime}\in {\cal L(H)}\mid \Vert T-T^{\prime}\Vert < \varepsilon\}$.
	
	Let $T^{\prime}\in {\cal U}$. By
	the Cauchy-Schwarz inequality (see \cite[p. 29]{Bauschke2011}), we obtain
	\begin{align}\label{eq:per_lege1}
	\langle x, T x\rangle -\langle x, T^{\prime}x\rangle = \langle x, (T-T^{\prime})x\rangle \leqslant \Vert T-T^{\prime}\Vert\Vert x\Vert^2\leqslant \varepsilon\Vert x\Vert^2. 
	\end{align}
	Substituting $\langle x, T x\rangle = \langle x, T_1x\rangle +\langle x, T_2 x\rangle$ into  \eqref{eq:per_lege1} we obtain
	\begin{align}\label{eq:per_lege2}
	\langle x, T^{\prime}x\rangle - \langle x, T_2 x\rangle \geqslant \langle x, T_1x\rangle -\varepsilon\Vert x\Vert^2.
	\end{align}
	Combining   \eqref{eq:per_lege2}  with $\langle x, T_1x\rangle \geqslant \alpha \Vert x\Vert^2$ yields
		$$\langle x, T^{\prime}x\rangle - \langle x, T_2 x\rangle \geqslant (\alpha - \varepsilon) \Vert x\Vert^2.$$
	From this  it follows that
	$\widetilde{Q}(x) := \langle x, T^{\prime}x\rangle - \langle x, T_2 x\rangle$ is elliptic.
	
	We have shown that the quadratic form $\langle x, T^{\prime}x\rangle=\widetilde{Q}(x)+ \langle x, T_2 x\rangle$ is the sum of an elliptic quadratic form and
	a quadratic form of finite rank. By \cite[Proposition 3.79]{BS00},  $\langle x, T^{\prime}x\rangle$ is a Legendre form. The proof is complete. \qed
\end{proof}

The next theorem gives the necessary and sufficient condition for the lower
semicontinuity of  ${\rm Sol} (\cdot)$.
\begin{theorem}\label{theo:lsc}
	Consider the problem \eqref{eq:QP_w} where $\langle x, T x\rangle$ is a Legendre form. Then, ${\rm Sol}(\cdot)$ is lsc   at $\omega$  if and only if the following three conditions are satisfied:
	\begin{itemize}
		\item [{\rm (i)}] ${\rm Sol}\eqref{eq:P_0} =\{0\}$; 
		\item [{\rm (ii)}] the system $g_i(x, \omega)\leqslant 0, i=1, \dots, m,$ is regular;
		\item [{\rm (iii)}] the set ${\rm Sol} (\omega)$ is a singleton.
	\end{itemize}
\end{theorem}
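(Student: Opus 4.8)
The plan is to prove the two implications separately, using the upper-semicontinuity result of Theorem \ref{theo:usc} together with the stability lemmas for the three hypotheses on one side, and, on the other side, a family of contradiction arguments in which I destabilize each failing condition by an explicit perturbation. Throughout I will exploit that, when a solution exists, it must be a single point under lsc, so the whole difficulty concentrates in converting multiplicity or degeneracy into a failure of lower semicontinuity.

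\textbf{Sufficiency.} Assume (i)--(iii). Since the system is regular, $F(\omega)\neq\emptyset$, so by (i) and Lemma \ref{theo:sol_nonemp} the set ${\rm Sol}(\omega)$ is nonempty, closed and bounded, and by (iii) it is a single point $\{\bar x\}$. By Theorem \ref{theo:usc}, conditions (i) and (ii) already give that ${\rm Sol}(\cdot)$ is usc at $\omega$, so the only thing left is to guarantee that nearby problems still have solutions. First I would observe that regularity is preserved under small perturbations, because a strict inequality $g_i(x^0,\omega)<0$ persists for $\omega'$ close to $\omega$, whence $F(\omega')\neq\emptyset$; that Lemma \ref{pro:open} keeps ${\rm Sol}\eqref{eq:P_0}=\{0\}$ valid for $\omega'$ near $\omega$; and that Lemma \ref{le:per_Leg} keeps the quadratic part Legendre. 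Thus Lemma \ref{theo:sol_nonemp} applies to $\omega'$ and yields ${\rm Sol}(\omega')\neq\emptyset$ on a neighborhood of $\omega$. Finally, given any open $V$ with ${\rm Sol}(\omega)\cap V\neq\emptyset$, we have $\bar x\in V$, hence ${\rm Sol}(\omega)=\{\bar x\}\subset V$; usc furnishes $\varepsilon_1>0$ with ${\rm Sol}(\omega')\subset V$ for $\|\omega'-\omega\|<\varepsilon_1$, and combining with nonemptiness on a ball of radius $\varepsilon_2$ gives $\emptyset\neq{\rm Sol}(\omega')\subset V$, i.e. ${\rm Sol}(\omega')\cap V\neq\emptyset$, whenever $\|\omega'-\omega\|<\min\{\varepsilon_1,\varepsilon_2\}$. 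This is exactly lsc at $\omega$.

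\textbf{Necessity.} Here I would argue by contraposition: assuming ${\rm Sol}(\omega)\neq\emptyset$, I show that if any one of (i)--(iii) fails then lsc fails, by producing $\omega^k\to\omega$, a point $\bar x\in{\rm Sol}(\omega)$, and an open ball $V\ni\bar x$ with ${\rm Sol}(\omega^k)\cap V=\emptyset$ for all $k$. If (ii) fails, Remark \ref{remark:lscF} supplies $\omega^k\to\omega$ with $F(\omega^k)=\emptyset$, hence ${\rm Sol}(\omega^k)=\emptyset$, and any ball around a point of ${\rm Sol}(\omega)$ works. If (i) fails, I would first note that the existence of a solution excludes the negative-infimum case of the recession problem, so the failure of (i) provides a nonzero $\bar v\in 0^+F(\omega)$ with $\langle\bar v,T\bar v\rangle\leqslant 0$ (existence of a solution even forces equality). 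Perturbing only the objective operator by a rank-one term, $T^k:=T-\tfrac1k P$ with $P$ the orthogonal projection onto ${\rm span}\{\bar v\}$ and $\|\bar v\|=1$, makes $\langle\bar v,T^k\bar v\rangle\leqslant-\tfrac1k<0$ while leaving the constraints, and hence $F(\omega)$ and $0^+F(\omega)$, unchanged. Along the feasible ray $\bar x+t\bar v$ the perturbed objective then tends to $-\infty$, so ${\rm Sol}(\omega^k)=\emptyset$; since $T^k\to T$, Lemma \ref{le:per_Leg} keeps the quadratic part Legendre, and lsc is violated.

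The delicate case, and the step I expect to be the main obstacle, is the necessity of (iii). Suppose $x_1\neq x_2$ both lie in ${\rm Sol}(\omega)$ and put $d:=x_1-x_2$. The plan is to \emph{tilt} the linear part, taking $c^k:=c+\tfrac1k d$ and fixing all other data, so that $\omega^k\to\omega$, the feasible sets coincide, and both (i) and the Legendre property persist automatically. For any $y^k\in{\rm Sol}(\omega^k)$, comparing with the feasible competitor $x_2$ gives $f(y^k,\omega)+\tfrac1k\langle d,y^k\rangle\leqslant f(x_2,\omega)+\tfrac1k\langle d,x_2\rangle$, and since $f(y^k,\omega)\geqslant\varphi(\omega)=f(x_2,\omega)$ this reduces to the key inequality $\langle d,y^k\rangle\leqslant\langle d,x_2\rangle=\langle d,x_1\rangle-\|d\|^2$. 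Consequently no solution of the perturbed problem can enter $V:=\{x:\|x-x_1\|<\|d\|/2\}$, for any such point would satisfy $\langle d,x\rangle>\langle d,x_1\rangle-\|d\|^2/2>\langle d,x_2\rangle$; hence ${\rm Sol}(\omega^k)\cap V=\emptyset$ while $x_1\in{\rm Sol}(\omega)\cap V$, contradicting lsc. The subtle points I would take care over are: extracting the recession direction in the failure of (i) from the two ways ${\rm Sol}\eqref{eq:P_0}\neq\{0\}$ can occur; checking that the rank-one and tilting perturbations remain in the admissible parameter space (self-adjointness, unchanged positive-semidefinite constraints) and preserve the Legendre hypothesis through Lemma \ref{le:per_Leg}; and the degenerate possibility ${\rm Sol}(\omega)=\emptyset$, for which lsc holds vacuously, so the statement is to be read in the regime ${\rm Sol}(\omega)\neq\emptyset$ secured by the standing feasibility of the problem.
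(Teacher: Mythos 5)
Your proposal is correct and follows essentially the same route as the paper's proof: sufficiency by combining the upper semicontinuity of Theorem \ref{theo:usc} with persistence of regularity, of the Legendre property (Lemma \ref{le:per_Leg}), and of ${\rm Sol}\eqref{eq:P_0}=\{0\}$ (Lemma \ref{pro:open}), so that Lemma \ref{theo:sol_nonemp} gives nonempty solution sets near $\omega$ and the singleton assumption converts usc into lsc; necessity by destabilizing each failing condition with an explicit perturbation, using Remark \ref{remark:lscF} for (ii). You deviate in two local tactical choices, both valid. For the necessity of (i) you perturb by a rank-one term $T^k=T-\tfrac1k P$, $P$ the projection onto ${\rm span}\{\bar v\}$, where the paper uses $T^{\varepsilon}=T-\varepsilon I$; either choice is self-adjoint, stays Legendre, makes $\langle\bar v,T^k\bar v\rangle<0$, and drives the objective to $-\infty$ along the feasible ray, so both arguments go through. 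For the necessity of (iii) you tilt the linear term directly by $d=x_1-x_2$, deriving $\langle d,y^k\rangle\leqslant\langle d,x_2\rangle$ for every perturbed solution $y^k$ and thereby excluding the ball of radius $\Vert d\Vert/2$ around $x_1$; the paper instead invokes the Hahn--Banach theorem to get a separating vector $\bar c$ and runs the analogous comparison on a neighborhood of $\bar x$. Your version is slightly cleaner, since in a Hilbert space the separating functional can simply be taken to be $d$ itself. Finally, your explicit caveat about the degenerate case ${\rm Sol}(\omega)=\emptyset$ (where lsc holds vacuously while (iii) fails) addresses a point the paper passes over silently by tacitly assuming ${\rm Sol}(\omega)\neq\emptyset$ in its necessity argument, so flagging it is a merit rather than a defect.
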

\begin{proof}
	{\it Necessity.} 
	To prove ${\rm (i)}$ we assume the contrary that there exists $\overline{v}\in {\cal H}$, $\overline{v}\neq 0$  such that
	\begin{align}
	\label{eq:nec-lsc3}T_i \overline{v} = 0,\;  \langle c_i, \overline{v}\rangle \leqslant 0,\; i=1,\ldots, m, \;\;
	\langle \overline{v},  T\overline{v}\rangle\leqslant 0.
	\end{align}
	
	Since $F(\omega)\neq \emptyset$,  it follows from \eqref{eq:nec-lsc3} that $F(\omega)$ is unbounded.
	For $\varepsilon > 0$, put $T^{\varepsilon} = T - \varepsilon I$,  where $I$ is the identity operator on $\mathcal{H}$. 
	It follows from Lemma  \ref{le:per_Leg}  that  $\langle x, T^{\varepsilon} x\rangle$ is a Legendre form  for $\varepsilon >0$  small enough.
	We can check that 
	$\langle \overline{v}, T^{\varepsilon} \overline{v}\rangle < 0$. Let $\omega^k =(T^{\varepsilon},  c, T_1,\ldots, T_m, c_1,\ldots, c_m,\alpha_1,\ldots, \alpha_m)$. Then, for any $x\in F(\omega^k)$,
		$$f(x+ t\overline{v}, \omega^k) = \frac{1}{2}\langle x+t\overline{v}, T^{\varepsilon}(x+t\overline{v})\rangle + \langle c, x+t\overline{v}\rangle \to -\infty \mbox{ as } t\to\infty.$$
	Thus ${\rm Sol}(\omega^k) = \emptyset$.
	This contradicts our assumption that ${\rm Sol}( \cdot )$ is lower semicontinuous at $\omega$.
	
	If ${\rm (ii)}$ does not hold, then
	one can find an  $\omega^k$ arbitrarily close to $\omega$ such that
		$$F(\omega^k) = \{x\in {\cal H}\mid g_i(x, \omega^k)\leqslant 0\}\;\; \mbox{is empty}.$$
	Hence, we can find a parameter $\omega^k=( T, c, T^k_1, \ldots, T^k_m, c^k_1, \ldots, c^k_m, \alpha_1^k, \ldots, \alpha_m^k)$  that 
	close to $\omega=( T, c, T_1, \ldots, T_m,  c_1,\ldots, c_m, \alpha_1, \ldots, \alpha_m)$ such that
	$F(\omega^k)$  is empty. Since ${\rm Sol}(\omega^k) = \emptyset$, ${\rm Sol}(\cdot)$ cannot be lower
	semicontinuous at $\omega$.
	
	Suppose that ${\rm Sol}(\cdot)$ is lower semicontinuous at $\omega$,
	but ${\rm Sol}(\omega)$ is not a singleton.
	Since ${\rm Sol} (\omega)\neq\emptyset$, there exist $\bar x,\bar y\in {\rm Sol}(\omega)$
	such that $\bar x\not=\bar y$. By the Hahn-Banach Theorem (see \cite[Theorem 1.7]{Brezis}) there exists $\bar c\in{\cal H}$ such that
		\begin{align}\label{eq:nec-lsc1}
	\Vert \bar c\Vert =1, \quad 
	\langle\bar c, \bar x\rangle >\langle \bar c, \bar y\rangle. %(23)
	\end{align}
	Clearly, there exists an open neighborhood $U$ of $\bar x$ such that
	\begin{align}\label{eq:nec-lsc2}
	\langle \bar c, x\rangle> \langle \bar c, \bar y\rangle \quad \text{for all}\quad x\in U. %(24)
	\end{align}
	Given any $\delta>0$, we fix a number $\varepsilon\in (0,\delta)$
	and put $c^\varepsilon=c+\varepsilon \bar c$. By \eqref{eq:nec-lsc1}, $\Vert
	c^\varepsilon-c\Vert=\varepsilon <\delta.$ Our next goal is to show
	that
		$${\rm Sol}(\omega^{\varepsilon})={\rm Sol} (T, c^\varepsilon, T_1,\ldots, T_m,  c_1,\ldots, c_m, \alpha_1,\ldots, \alpha_m)\cap U=\emptyset.$$
	For any
	$x\in F(\omega) \cap U$, since $\bar x,\bar y\in {\rm Sol}(\omega)$, by \eqref{eq:nec-lsc2} we have
	\begin{eqnarray*}
		\frac{1}{2}\langle x, Tx\rangle + \langle c^\varepsilon, x \rangle
		&=&\frac{1}{2}\langle x, Tx\rangle+\langle(c+\varepsilon \bar c), x\rangle =(\frac{1}{2}\langle x, Tx\rangle +\langle c, x\rangle)+\varepsilon \langle \bar c, x\rangle\\
		&\geqslant & \frac {1}{2}\langle\bar x, T\bar x\rangle+\langle c,\bar x\rangle+\varepsilon \langle\bar c, x\rangle>\frac {1}{2}\langle\bar y, T\bar y\rangle+\langle c,\bar y\rangle+\varepsilon \langle\bar c, \bar y\rangle\\
		&=& \langle \bar y, T\bar y\rangle +\langle c^{\varepsilon}, \bar y\rangle.
		\end {eqnarray*}
		It follows that $x\not\in {\rm Sol}(\omega^{\varepsilon})$. Thus,
		for the chosen neighborhood $U$ of $\bar x\in {\rm Sol}(\omega^{\varepsilon})$
		and for every $\delta>0$, there exists $c^\varepsilon\in { \cal H}$
		satisfying $\Vert c^\varepsilon-c\Vert<\delta$ and ${\rm
			Sol}(\omega^{\varepsilon})\cap U=\emptyset$. This contradicts the
		lower semicontinuity  of ${\rm Sol} (\cdot)$. Hence
		${\rm Sol}(\omega)$ is a singleton.

		{\it Sufficiency.} Let $U$ be an open set in $\cal H$ containing the unique solution  $\overline{x}\in {\rm Sol}(\omega)$.
		By   ${\rm (ii)}$, there exists $\delta_1>0$ such that $F(\omega^{\prime})\neq \emptyset$, for every  $\omega^{\prime}\in\Omega$ satisfying 
		$\Vert \omega^{\prime}- \omega\Vert < \delta_1.$
		
		By ${\rm (i)}$, and Lemmas \ref{le:per_Leg}, \ref{pro:open}, there exists $\delta_2$
		such that    for every 
				$$(T^{\prime}, T^{\prime}_1,\ldots,  T^{\prime}_1, c^{\prime}_1,\ldots, c^{\prime}_m)\in {\cal L(H)}^{m+1}\times {\cal H}^m$$
			satisfying 
				$$\max\{\Vert T^{\prime}-T \Vert, \Vert T_1^{\prime}-T_1 \Vert,\ldots, 
			\Vert T_m^{\prime}-T_m \Vert , 
			\Vert c_1^{\prime}- c_1\Vert, \ldots,\Vert c_m^{\prime}- c_m\Vert \}< \delta_2,$$
			$\langle x,  T^{\prime}x\rangle$ is a Legendre form and  ${\rm Sol}( QPR_{\omega^{\prime}}) = \{0\}$. 
		Let $\delta := \min \{\delta_1, \delta_2\}$.
		By Lemma \ref{theo:sol_nonemp},  for every
		$\omega^{\prime}$  satisfying
		\begin{align}\label{eq:prooflsc}
		\Vert \omega^{\prime}-\omega\Vert\leqslant \delta,
		\end{align}
		we have ${\rm Sol}(\omega^{\prime}) \neq \emptyset$.
		From ${\rm (i), \; (ii)}$ and   Theorem \ref{theo:usc} it follows that ${\rm Sol}(\cdot)$ is upper semicontinuous at $\omega$. Hence, for $\delta >0$  small enough,  ${\rm Sol} (\omega^{\prime})\subset U$  for every $\omega^{\prime}$ satisfying \eqref{eq:prooflsc}.
		
		For such an $\delta> 0$, from what has been said it follows that ${\rm Sol} (\omega^{\prime})\cap \Omega \neq \emptyset$ for every $\omega^{\prime}$ satisfying  \eqref{eq:prooflsc}. 
		This shows that ${\rm Sol}(\cdot)$ is lower semicontinuous at $\omega$. The proof is complete. \qed
	\end{proof}
	The following example shows that  the assumption on the Legendre property of the quadratic form cannot be dropped from the assumption of Theorem \ref{theo:lsc}.
	\begin{example}\rm 
		Consider the problem \eqref {ex:unboundedSet}  in Example \ref{exam:unbounded}. Let   ${\rm Sol}(\omega)$ denote  the solution set of  \eqref {ex:unboundedSet}. It follows from Example \ref{exam:unbounded} that ${\rm Sol}(\omega)$ is unbounded.  
		
		Let $\omega^{\varepsilon}= (T^{\varepsilon}, T_1, \alpha)$ where  $T^{\varepsilon} = 0+\varepsilon E$, $\varepsilon >0$ and $E$ is the identity operator on $L_2[0,1]$. Since $\Vert T^{\varepsilon}- T\Vert = \varepsilon$, we have 
			$$\Vert \omega^{\varepsilon}-\omega\Vert = \max \{\Vert T^{\varepsilon}- T\Vert, \Vert T_1-T_1\Vert, \vert \alpha_1 - \alpha_1\vert \} = \Vert T^{\varepsilon}- T\Vert  \to 0 \mbox{ as } \varepsilon \to 0.$$
			Consider the  problem 
		\begin{align}\label{ex:not_usc}
		\begin{cases}
		& \min  f(x,\omega^{\varepsilon})= \frac{\varepsilon}{2}\Vert x\Vert^2\\
		&\mbox{ s. t. }   x\in L_2[0,1],  g_1(x, \omega^{\varepsilon}) = \frac{1}{2} \langle x, T_1 x\rangle +\alpha_1\leqslant  0.
		\end{cases}
		\end{align}
		Let   ${\rm Sol}(\omega^{\varepsilon})$ denote  the solution set of \eqref{ex:not_usc}.
		It is clear that ${\rm Sol}(\omega^{\varepsilon}) = \{0\}$.  Let $V$ be an open set in $\cal H$ such that  $V\cap {\rm Sol}(\omega) \neq \emptyset$ and $0 \notin V$. Then,  $V\cap {\rm Sol}(\omega^{\varepsilon}) = \emptyset$. Hence ${\rm Sol}(\cdot )$ is not lsc at $\omega=(T, T_1,\alpha_1)$. 
	\end{example}  
	\begin{corollary}
		Consider the problem \eqref{eq:QP_w} where  $\langle x, T x\rangle$ is a nonpositive Legendre form, then the multifunction 
		$\rm Sol( . )$ is lower  secontinuous at $\omega$ if and only if  the following
		conditions are satisfied:
		\begin{itemize}
			\item [{\rm (i)}] $F(\omega)$ is a compact set;
			\item [{\rm (ii)}]  the system $g_i(x, \omega)\leqslant 0$, $i=1,\ldots, m$, is regular;
			\item [{\rm (iii)}] the set ${\rm Sol} (\omega)$ is a singleton.
		\end{itemize}
	\end{corollary}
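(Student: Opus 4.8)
The plan is to read the statement off Theorem \ref{theo:lsc} after observing that, when $\langle x,Tx\rangle$ is nonpositive, condition (i) of that theorem can be traded for compactness of the constraint set. Since conditions (ii) and (iii) of the corollary are word-for-word conditions (ii) and (iii) of Theorem \ref{theo:lsc}, the whole problem reduces to establishing, under the standing hypothesis that $\langle x,Tx\rangle$ is a nonpositive Legendre form and that (ii) holds (so that $F(\omega)\neq\emptyset$ by regularity), the equivalence
$$ {\rm Sol}\eqref{eq:P_0}=\{0\}\quad\Longleftrightarrow\quad F(\omega)\text{ is compact.}$$
Once this is in hand, plugging it into Theorem \ref{theo:lsc} yields both directions of the corollary at once.

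First I would use the nonpositivity of the objective to identify ${\rm Sol}\eqref{eq:P_0}$ with the recession cone. Problem \eqref{eq:P_0} minimizes $\tfrac12\langle v,Tv\rangle$ over the cone $0^+F(\omega)$, and $\langle v,Tv\rangle\leqslant 0$ for every $v$, with equality at $v=0$. If $0^+F(\omega)\neq\{0\}$, choose $\bar v\in 0^+F(\omega)\backslash\{0\}$; since $t\bar v\in 0^+F(\omega)$ for all $t\geqslant 0$ and $\tfrac12\langle t\bar v,T(t\bar v)\rangle=\tfrac{t^2}{2}\langle \bar v,T\bar v\rangle$, either $\langle\bar v,T\bar v\rangle<0$, so the value tends to $-\infty$ and ${\rm Sol}\eqref{eq:P_0}=\emptyset$, or $\langle\bar v,T\bar v\rangle=0$, so $\bar v$ is a minimizer different from $0$; in either case ${\rm Sol}\eqref{eq:P_0}\neq\{0\}$. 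Conversely, if $0^+F(\omega)=\{0\}$ the only feasible point of \eqref{eq:P_0} is $0$, whence ${\rm Sol}\eqref{eq:P_0}=\{0\}$. This proves
$$ {\rm Sol}\eqref{eq:P_0}=\{0\}\quad\Longleftrightarrow\quad 0^+F(\omega)=\{0\}.$$

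It then remains to pass between triviality of the recession cone and compactness of $F(\omega)$. The implication ``$F(\omega)$ compact $\Rightarrow 0^+F(\omega)=\{0\}$'' is immediate, since a compact set is bounded and the recession cone of a nonempty closed bounded convex set is $\{0\}$. The reverse implication ``$0^+F(\omega)=\{0\}\Rightarrow F(\omega)$ compact'' is the heart of the matter, and I expect it to be the main obstacle: on a general Hilbert space a closed convex set with trivial recession cone need not be bounded, so the nonpositive Legendre hypothesis has to be brought to bear. The route I would take is through Lemma \ref{remark1}: were $F(\omega)$ unbounded, a sequence $x^k\in F(\omega)$ with $\Vert x^k\Vert\to\infty$ and $\Vert x^k\Vert^{-1}x^k\rightharpoonup\bar v$ would give $\bar v\in 0^+F(\omega)$, and I would then argue, exactly as in the boundedness part of Lemma \ref{theo:sol_nonemp} and in Lemma \ref{pro:open}, that the Legendre property upgrades the weak convergence of the normalized sequence to strong convergence, forcing $\bar v\neq 0$ and hence contradicting $0^+F(\omega)=\{0\}$. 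This gives boundedness, which together with closedness of $F(\omega)$ and the compactness secured by the Legendre structure yields that $F(\omega)$ is compact.

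Combining the two equivalences established above, under the nonpositive Legendre hypothesis condition (i) of Theorem \ref{theo:lsc} is precisely condition (i) of the corollary. Therefore ${\rm Sol}(\cdot)$ is lsc at $\omega$ if and only if $F(\omega)$ is compact, the system $g_i(x,\omega)\leqslant 0$, $i=1,\ldots,m$, is regular, and ${\rm Sol}(\omega)$ is a singleton, which is exactly the assertion.
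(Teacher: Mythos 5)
Your reduction to Theorem \ref{theo:lsc} is the right strategy, and most of it is carried out correctly: the equivalence ${\rm Sol}\eqref{eq:P_0}=\{0\}\Leftrightarrow 0^+F(\omega)=\{0\}$ under nonpositivity is exactly the paper's own argument, the sufficiency direction is fine, and your boundedness argument is valid --- since $\langle x,Tx\rangle\leqslant 0$ everywhere, a normalized unbounded sequence $v^k=\Vert x^k\Vert^{-1}x^k\rightharpoonup\bar v\in 0^+F(\omega)=\{0\}$ (Lemma \ref{remark1}) satisfies $0=\langle\bar v,T\bar v\rangle\leqslant\underset{k\to\infty}{\liminf}\,\langle v^k,Tv^k\rangle\leqslant\underset{k\to\infty}{\limsup}\,\langle v^k,Tv^k\rangle\leqslant 0$, so $\langle v^k,Tv^k\rangle\to\langle\bar v,T\bar v\rangle$, the Legendre property upgrades weak to norm convergence, and $\Vert v^k\Vert=1$ gives the contradiction.

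The genuine gap is the very last step of the necessity direction: from ``$F(\omega)$ is closed and bounded'' you conclude compactness by appealing to ``the compactness secured by the Legendre structure,'' which is not an argument. In an infinite-dimensional Hilbert space a closed bounded convex set is weakly compact but in general not norm compact (the closed unit ball already fails), so closedness plus boundedness does not yield condition (i) as stated. The missing ingredient --- and the way the paper closes this step --- is that a nonpositive Legendre form can exist only when $\cal H$ is finite-dimensional (\cite[Theorem 11.2]{Hestenes1}). One can also see this directly: if $\dim{\cal H}=\infty$, pick an orthonormal sequence $\{e_k\}$; then $e_k\rightharpoonup 0$, and nonpositivity together with weak lower semicontinuity forces $\langle e_k,Te_k\rangle\to 0=\langle 0,T0\rangle$ along a subsequence, so the Legendre property would give $e_k\to 0$ in norm, contradicting $\Vert e_k\Vert=1$. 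Once finite-dimensionality is in hand, Heine--Borel gives compactness (and your boundedness argument could then be replaced by the standard finite-dimensional fact, used in the paper, that a nonempty closed convex set with $0^+F(\omega)=\{0\}$ is bounded). Without this ingredient the necessity direction of the corollary is not proved.
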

	\begin{proof}
		Suppose that $\rm Sol( \cdot )$ is lower  semicontinuous at $\omega$.
		Since $\rm Sol( \cdot )$ is lower semicontinuous at $\omega$, by Theorem
		\ref{theo:lsc}, conditions ${\rm (ii)}$ and  ${\rm (iii)}$ are satisfied and
		${\rm Sol}\eqref{eq:P_0} =\{0\}.$
		We now claim that $0^+F(\omega)=\{0\}$. Indeed, by $\langle x, T x\rangle$ is  nonpositive, we have $\langle v, T v\rangle \leqslant 0$ for every $v\in 0^+F(\omega)$. If there exists no $\bar v\in 0^+F(\omega)$ with property that $\langle\bar v, T \bar v\rangle < 0$ then ${\rm Sol}\eqref{eq:P_0} =\{0\}= 0^+F(\omega)$. If $\langle\bar v, T \bar v\rangle < 0$ for some $\bar v\in 0^+F(\omega)$ then it is obvious that ${\rm Sol}\eqref{eq:P_0} =\emptyset$, which is impossible. Our claim is proved. 	Since $\langle x, T x\rangle$ is a nonpositive Legendre form, $\cal H$ is of finite dimension (see, \cite[Theorem 11.2]{Hestenes1}). Hence, since $F(\omega)$ is  a nonempty closed set and $0^+F(\omega)=\{0\}$, $F(\omega)$ is a compact set. 
		
		Conversely, suppose that ${\rm (i)}, {\rm (ii)}$  and  ${\rm (iii)}$ are satisfied. As $F(\omega)\neq \emptyset$ by  assumption ${\rm (i)}$ implies that $0^+F(\omega)= \{0\}$. Therefore, ${\rm Sol}\eqref{eq:P_0} =\{0\}$. Since the conditions ${\rm (i), (ii)}$ and ${\rm (iii)}$ in Theorem \ref{theo:lsc} are satisfied, we conclude that $\rm Sol( \cdot )$ is lsc at  $\omega$. The proof is complete \qed
	\end{proof}

	\section{Continuity of the Optimal Value Function}
	
	We now present a set of two conditions which is necessary and sufficient for the continuity of $\varphi$ at a point $\omega$ where $\varphi$ has a finite value.
	\begin{theorem}\label{theo:covf}
		Consider the problem $(QP_{\omega})$ where $\langle x, T x\rangle$ is a Legendre form.
		Let $\omega\in \Omega$. Suppose that $\varphi (\omega)\neq\pm \infty$. Then, the optimal value function $\varphi (\cdot)$ is continuous at $\omega$ if and only if the following two conditions are satisfied:
		\begin{itemize}
			\item [{\rm (i)}] ${\rm Sol}\eqref{eq:P_0} =\{0\}$;
			\item [{\rm (ii)}] The system $g_i(x, \omega)\leqslant 0, i=1,\ldots, m$, is regular.
		\end{itemize} 
	\end{theorem}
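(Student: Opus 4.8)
The plan is to prove the two implications separately, reading ``$\varphi$ is continuous at $\omega$'' as the conjunction of upper semicontinuity ($\limsup_{\omega^k\to\omega}\varphi(\omega^k)\leqslant\varphi(\omega)$) and lower semicontinuity ($\liminf_{\omega^k\to\omega}\varphi(\omega^k)\geqslant\varphi(\omega)$) of the extended-real-valued function $\varphi$. A recurring preliminary observation is that, once (i) and (ii) are assumed, Lemmas~\ref{le:per_Leg}, \ref{pro:open} and \ref{pro-lscF} guarantee that for every $\omega'$ close enough to $\omega$ the form $\langle x,T'x\rangle$ is still a Legendre form, ${\rm Sol}(QPR_{\omega'})=\{0\}$, and $F(\omega')\neq\emptyset$; hence Lemma~\ref{theo:sol_nonemp} yields that ${\rm Sol}(\omega')$ is nonempty, closed and bounded, so in particular $\varphi(\omega')$ is finite near $\omega$.

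For \emph{necessity}, I would argue by contraposition exactly as in the necessity part of Theorem~\ref{theo:lsc}. If (ii) fails, Remark~\ref{remark:lscF} produces $\omega^k\to\omega$ with $F(\omega^k)=\emptyset$, so $\varphi(\omega^k)=+\infty$ while $\varphi(\omega)$ is finite, contradicting upper semicontinuity. If (i) fails, there is $\bar v\neq0$ with $\bar v\in0^+F(\omega)$ and $\langle\bar v,T\bar v\rangle\leqslant0$; perturbing only the objective operator to $T^\varepsilon=T-\varepsilon I$, which remains Legendre for small $\varepsilon$ by Lemma~\ref{le:per_Leg}, leaves $F$ unchanged but gives $\langle\bar v,T^\varepsilon\bar v\rangle<0$, so $f(x+t\bar v,\omega^\varepsilon)\to-\infty$ along the recession direction and $\varphi(\omega^\varepsilon)=-\infty$. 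Since $\omega^\varepsilon\to\omega$, this violates lower semicontinuity. Thus continuity forces both (i) and (ii).

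For \emph{sufficiency}, assume (i) and (ii) and fix an arbitrary $\omega^k\to\omega$. Upper semicontinuity is the easier half: by Lemma~\ref{theo:sol_nonemp} pick $\bar x\in{\rm Sol}(\omega)$ with $f(\bar x,\omega)=\varphi(\omega)$, and by (ii) together with Lemma~\ref{pro-lscF} choose $\xi^k\in F(\omega^k)$ with $\xi^k\to\bar x$ \emph{strongly}. Then $\varphi(\omega^k)\leqslant f(\xi^k,\omega^k)$, and since strong convergence of $\xi^k$ together with $T^k\to T$ and $c^k\to c$ makes $f(\xi^k,\omega^k)\to f(\bar x,\omega)$ (the only nontrivial term $\langle\xi^k,(T^k-T)\xi^k\rangle$ is controlled by $\|T^k-T\|\,\|\xi^k\|^2\to0$ as in \eqref{eq:lema_1b}), we obtain $\limsup_k\varphi(\omega^k)\leqslant\varphi(\omega)$.

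The main obstacle is lower semicontinuity, where the minimizers could a priori run off to infinity. My plan is to rule this out using the compactness of the solution set supplied by the earlier results. Passing to a subsequence realizing $\liminf_k\varphi(\omega^k)$ and choosing $x^k\in{\rm Sol}(\omega^k)$, nonempty for large $k$ by the preliminary observation, I would invoke Theorem~\ref{theo:usc}: since (i) and (ii) hold, ${\rm Sol}(\cdot)$ is usc at $\omega$, and since ${\rm Sol}(\omega)$ is bounded by Lemma~\ref{theo:sol_nonemp} one may fix a bounded open set $V\supset{\rm Sol}(\omega)$, whence ${\rm Sol}(\omega^k)\subset V$ for $k$ large and the sequence $\{x^k\}$ is bounded. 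Extracting a weakly convergent subsequence $x^k\rightharpoonup x_0$, Lemma~\ref{le:converge}(a) applied to each $g_i$ gives $g_i(x_0,\omega)\leqslant\liminf_k g_i(x^k,\omega^k)\leqslant0$, so $x_0\in F(\omega)$; the same lemma applied to the objective gives $\liminf_k f(x^k,\omega^k)\geqslant f(x_0,\omega)\geqslant\varphi(\omega)$. Since $f(x^k,\omega^k)=\varphi(\omega^k)$ along the chosen subsequence, this yields $\liminf_k\varphi(\omega^k)\geqslant\varphi(\omega)$, completing the proof. The delicate points to check carefully are that the weak limit stays feasible, which is precisely the weak lower semicontinuity of the convex constraints recorded in Lemma~\ref{le:converge}(a), and that the boundedness argument via usc genuinely applies, i.e.\ that ${\rm Sol}(\omega^k)$ is nonempty so that a minimizer $x^k$ exists to be trapped in $V$.
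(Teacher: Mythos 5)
Your proof is correct, and its overall skeleton matches the paper's: necessity by contraposition with the same two perturbations (Remark~\ref{remark:lscF} when (ii) fails; $T^{\varepsilon}=T-\varepsilon I$ along a nonzero recession direction, kept Legendre by Lemma~\ref{le:per_Leg}, when (i) fails), and sufficiency split into an upper estimate via Lemma~\ref{pro-lscF} (feasible points $\xi^k\to\bar x$ strongly, so $\limsup_k\varphi(\omega^k)\leqslant\varphi(\omega)$) and a lower estimate via boundedness of the perturbed minimizers plus weak compactness and Lemma~\ref{le:converge}. The one genuine difference is how the boundedness of $\{x^k\}$ is obtained. The paper re-runs the normalization argument inline: it divides the inequality $f(x^k,\omega^k)\leqslant f(y_k,\omega^k)$ by $\Vert x^k\Vert^2$, extracts a weak limit $v\in 0^+F(\omega)$, uses the Legendre property to show $v\neq 0$, and contradicts (i). You instead invoke Theorem~\ref{theo:usc}: under (i) and (ii) the map ${\rm Sol}(\cdot)$ is usc at $\omega$, and since ${\rm Sol}(\omega)$ is bounded (Lemma~\ref{theo:sol_nonemp}) one can choose a bounded open $V\supset{\rm Sol}(\omega)$ that traps ${\rm Sol}(\omega^k)$ for large $k$. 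This shortcut is legitimate: Theorem~\ref{theo:usc} is established before and independently of the present theorem, so there is no circularity, and you correctly identify the two facts it needs, namely that ${\rm Sol}(\omega^k)\neq\emptyset$ for large $k$ (so a minimizer exists to be trapped, supplied by Lemmas~\ref{le:per_Leg}, \ref{pro:open}, \ref{pro-lscF} and \ref{theo:sol_nonemp}) and that ${\rm Sol}(\omega)$ is bounded (so $V$ can be taken bounded). What each route buys: yours is shorter and avoids repeating the Legendre-form recession argument, which by this point has already appeared in Lemma~\ref{theo:sol_nonemp}, Lemma~\ref{pro:open} and Theorem~\ref{theo:usc}; the paper's inline version keeps the proof of Theorem~\ref{theo:covf} independent of Theorem~\ref{theo:usc}, relying only on the lemmas, and hence is marginally more self-contained. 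Both arguments are complete.
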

	\begin{proof}
		Suppose that $\varphi (\cdot)$ is continuous at $\omega$ and $\varphi (\omega)\neq\pm \infty$. 
		
		If the system $g_i(x, \omega)\leqslant 0, i=1,\ldots, m$,  is irregular, then  by Remark \ref{remark:lscF} there exists  a sequence 
		\begin{center}
			$\omega^k:= \{(T^k_1,\ldots, T^k_m, c^k_1,\ldots, c^k_m, \alpha^k_1,\ldots, \alpha^k_m)\}\in {\cal L(H)}^m\times {\cal H}^m\times \mathbb R^n $
		\end{center}
		converging to 
		$ \omega:= (T_1,\ldots, T_m, c_1,\ldots, c_m, \alpha_1,\ldots, \alpha_m)$
		such that, for every $k$, 
		\begin{center}
			$g_i(x, \omega^k):=\frac{1}{2}\langle x, T^k_i x\rangle + \langle c^k_i, x\rangle +\alpha^k_i\leqslant 0,\; i=1,\ldots, m$
		\end{center}
		has no solution. Since $F(\omega^k)$ is empty for every $k$,  $\varphi (\omega^k) = +\infty.$ 
		On the other hand, since $\varphi (\cdot)$ is continuous at 
		$\omega$,
		$\varphi(\omega)= \underset{k\to\infty}{\lim}\varphi(\omega^k) = +\infty$. We have arrived at a contradiction. This shows that $g_i(x, \omega)\leqslant 0$ is regular.
		
		We now suppose that ${\rm Sol}\eqref{eq:P_0} \neq \{0\}$. Then, there is a nonzero $\overline{v}\in {\cal H}$ such that
					$$ \langle \bar v, T \bar v\rangle \leqslant 0, \;\;  
			T_i\bar v = 0,\; \langle c_i, \bar v\rangle \leqslant 0,\;\; i=1,\ldots, m. $$
			Define $T^k = T-\frac{1}{k} E$, where $E$ is the identity operator on $\cal H$.  It follows from Lemma \ref{le:per_Leg} that  $\langle x, T^k x\rangle$ is a Legendre form for  $k$ large enough. 
		It is easy to check that  
		$\langle \bar v, T^k \bar v \rangle < 0$.
		Consider the sequence $\{\omega^k\}$,
				$$\omega^k = (T^k, c, T_1,\ldots,T_m,  c_1,\ldots,c_m,\alpha_1,\ldots,\alpha_m).$$
				From the assumption $\varphi (\omega)\neq \pm\infty$, it follows that $F(\omega)\neq \emptyset$. 
		Hence, for any $x\in F(\omega)$ and for any $t> 0$, we have $x+t\bar v\in F(\omega)$ and 
				$$f(x+t\bar v, \omega^k) = \frac{1}{2}\langle x+t\bar v, T^k (x+t\bar v) \rangle+\langle c, x+t\bar v\rangle\to -\infty \mbox{ as } t\to\infty.$$
			This implies that, for every $k$ large enough, ${\rm Sol}(\omega^k)= \emptyset$ and $\varphi (\omega^k) = -\infty$. We arrived at a contradiction, because $\varphi (\cdot)$ is continuous at $\omega$, $\omega^k$ converges to $\omega$ and $\varphi (\omega)\neq \pm\infty$. 	
		
		From now on we assume that ${\rm (i)}$ and  ${\rm (ii)}$ are satisfied and $\omega^k$ is an arbitrarily sequence in 
		${\cal L(H)}^{m+1}\times {\cal H}^{m+1}\times \mathbb R^m$ converging to $\omega$. 
		By ${\rm (ii)}$ and Lemma \ref{pro-lscF},  there   exists a positive integer $k_0$ such that $F( \omega^k)\neq \emptyset$ for every $k\geqslant k_0$. By ${\rm (i)}$, Lemma \ref{pro:open} and  Lemma \ref{le:per_Leg}, there exists  positive integer $k_1\geqslant k_0$ such that for all $k\geqslant k_1$,  $\langle x, T^k x\rangle$ is a Legendre form and 
		${\rm Sol}(QPR_{\omega^k}) =\{0\}$. It follows from Lemma  \ref{theo:sol_nonemp} that
		${\rm Sol}(\omega^k)\neq\emptyset$ with all $k$ large enough.
		Therefore, for every $k\geqslant k_1,\; \varphi (\omega^k)$ is finite. This means that, for every $k\geqslant k_1$, there exists $x^k\in {\cal H}$ satisfying 
		\begin{center}
			$\varphi(\omega^k) = \frac{1}{2}\langle x^k, T^k x^k\rangle + \langle c^k, x^k\rangle,	\; 	 g_i(x, \omega^k)\leqslant 0.$
		\end{center}
		By ${\rm (i)}$ and Lemma \ref{theo:sol_nonemp}, it follows that ${\rm Sol}(\omega)\neq \emptyset$. 
		Taking any $x_0\in {\rm Sol}(\omega)$, we have 
		\begin{align*}
		&\varphi(\omega) =\frac{1}{2} \langle x_0, T x_0\rangle +\langle c, x_0\rangle,\\ 
		& g_i(x_0, \omega) =\frac{1}{2} \langle x_0, T_i x_0 \rangle + \langle c_i, x_0\rangle + \alpha_i\leqslant 0,\;\; i=1,\ldots, m.
		\end{align*}  
		By  ${\rm (ii)}$ and Lemma \ref{pro-lscF}, there exists a sequence $\{y_k\}\subset {\cal H}$ converging to $x_0$ and 
		\begin{align}\label{eq:theo_covf3}
	\hspace*{-0.23truecm}	g_i(y_k,\omega) = \frac{1}{2}\langle y_k, T_i y_k \rangle + \langle c_i, y_k\rangle + \alpha_i\leqslant 0,\, i=1,\ldots,m \mbox{ for every } k\geqslant k_1.
		\end{align} 
		From \eqref{eq:theo_covf3} it follows that $y_k\in F(\omega^k)$ for  $k\geqslant k_1$. Then
		\begin{align}\label{eq:theo_covf4a}
		\varphi (\omega^k)\leqslant \frac{1}{2}\langle y_k, T^k y_k\rangle + \langle c^k, y_k\rangle.
		\end{align}
		It follows from \eqref{eq:theo_covf4a}  that
		\begin{alignat}{2} \label{eq:theo_covf4}
		&\underset{k\to\infty}{\limsup}\,\varphi (\omega^k)\; & \leqslant \; &  \underset{k\to\infty}{\limsup}\Big(\frac{1}{2}\langle y_k, T^k y_k\rangle + \langle c^k, y_k\rangle\Big) 
		=\underset{k\to\infty}{\lim}\Big(\frac{1}{2}\langle y_k, T^k y_k\rangle + \langle c^k, y_k\rangle\Big) \notag{}\\
		& &=& \frac{1}{2}\langle x_0, T x_0\rangle + \langle c, x_0\rangle= \varphi (\omega).
		\end{alignat}
		We now claim that the sequence $\{x^k\}$ is bounded. Indeed, if it is unbounded then, by taking a subsequence if necessary, we can assume that   $\Vert x^k\Vert\neq 0$ for every $k$ and $\Vert x^k\Vert\to\infty$ as $k\to\infty$. Then, the sequence $\{v^k\}=\{\Vert x^k\Vert^{-1}x^k\}$ is bounded. Without loss of generality, we can assume
		that $v^k$ itself weakly converges to some $v$.
		It is easy to check that $v\in 0^+F(\omega)$. 
		By dividing both sides of the inequality 
				$$\frac{1}{2}\langle x^k, T^k x^k\rangle + \langle c^k, x^k\rangle 
			\leqslant \frac{1}{2}\langle y_k, T^k y_k\rangle+ \langle c^k, y_k\rangle$$
			by $\Vert x^k\Vert^{2}$, letting $k\to\infty$ and by Lemma \ref{le:converge}, we get
				$$\langle v, T v\rangle\leqslant \underset{k\to\infty}{\lim\inf}\langle v^k, T^k v^k\rangle\leqslant 
			\underset{k\to\infty}{\lim\sup}\langle v^k, T^k v^k \rangle\leqslant 0.$$
			By an argument analogous to that used in the proof of Lemma \ref{pro:open}, 	 we have $v\in 0^+F({\omega})\setminus \{0\}$ and 	$\langle v, Tv\rangle \leqslant 0$,   contrary to ${\rm (i)}$. 
		We have thus shown that the sequence $\{x^k\}$ is bounded; hence it has a weakly convergent subsequence. Without loss of generality, we may assume that $x^k$ weakly converges to $ \bar x$. 	By Lemma \ref{le:converge}, we get  $\bar x\in F(\omega)$ and 
		\begin{equation}\label{eq:theo_covf5}
		\underset{k\to\infty}{\liminf}\,\varphi (\omega^k) =\underset{k\to\infty}{\liminf}\big(\langle x^k, T^k x^k\rangle+\langle c^k, x^k\rangle\big)
		\geqslant \langle \bar x, T \bar x\rangle +\langle c, \bar x\rangle
		\geqslant \varphi (\omega).
		\end{equation}
		Combining \eqref{eq:theo_covf4} with \eqref{eq:theo_covf5} gives
		$\underset{k\to\infty}{\lim}\varphi (\omega^k) = \varphi (\omega).$
		This shows that $\varphi (\cdot)$ is continuous at $\omega$. The proof is complete. \qed
	\end{proof}
	\begin{remark}
		Note that  if the assumption on  Legendre property of the quadratic form is omitted, then  the problem \eqref{eq:QP_w} may have no solution (see, \cite[Example 3.3]{DongTam15b}).  Hence the conclusion of  Theorem \ref{theo:covf} fails if the assumption on  Legendre property of the quadratic form is omitted.
	\end{remark}

	\begin{theorem}\label{theo:stability}
		Consider the problem \eqref{eq:QP_w} where $\langle x, T x\rangle$ is a Legendre form.
		The following four statements are equivalent:
		\begin{itemize}
			\item [{\rm ($\rm \beta_1$)}] the solution map ${\rm Sol} (\cdot)$  is lower semicontinuous at $\omega$;
			\item [{\rm ($\rm \beta_2$)}]   the solution map ${\rm Sol} (\cdot)$ is continuous at $\omega$;
			\item [{\rm ($\rm \beta_3$)}] ${\rm Sol} (\omega)$ is a singleton and the optimal value function $\varphi ( \cdot )$ is Lipschitz continuous around
			$\omega$;
			\item [{\rm ($\rm \beta_4$)}] ${\rm Sol} (\omega)$ is a singleton and the optimal value function $\varphi ( \cdot )$ is continuous 	at $\omega$.
		\end{itemize}	
	\end{theorem}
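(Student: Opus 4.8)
The plan is to reduce every implication to the three characterization results already established — Theorem \ref{theo:usc}, Theorem \ref{theo:lsc}, and Theorem \ref{theo:covf} — which all revolve around the same two hypotheses, namely (i) ${\rm Sol}\eqref{eq:P_0}=\{0\}$ and (ii) regularity of the constraint system; only one implication, the passage from plain continuity of $\varphi$ to Lipschitz continuity, will require genuinely new work.

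First I would dispose of $(\beta_1)\Leftrightarrow(\beta_2)$. Since continuity subsumes lower semicontinuity, $(\beta_2)\Rightarrow(\beta_1)$ is immediate. Conversely, if ${\rm Sol}(\cdot)$ is lsc at $\omega$, then Theorem \ref{theo:lsc} guarantees that conditions (i) and (ii) hold; feeding exactly these two conditions into Theorem \ref{theo:usc} yields upper semicontinuity of ${\rm Sol}(\cdot)$ at $\omega$, hence continuity, which is $(\beta_2)$. Next I would establish $(\beta_1)\Leftrightarrow(\beta_4)$. By Theorem \ref{theo:lsc}, $(\beta_1)$ is equivalent to the conjunction of (i), (ii), and (iii) that ${\rm Sol}(\omega)$ is a singleton. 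Since a singleton solution set is in particular nonempty, $\varphi(\omega)$ is finite, so Theorem \ref{theo:covf} applies and shows that (i) and (ii) together are equivalent to continuity of $\varphi(\cdot)$ at $\omega$. Therefore $(\beta_1)$ is equivalent to the statement that (iii) holds and $\varphi$ is continuous at $\omega$, which is precisely $(\beta_4)$; in the direction $(\beta_4)\Rightarrow(\beta_1)$ one reads the singleton hypothesis as (iii) and extracts (i), (ii) from continuity of $\varphi$ through Theorem \ref{theo:covf}.

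It remains to prove $(\beta_3)\Leftrightarrow(\beta_4)$. The implication $(\beta_3)\Rightarrow(\beta_4)$ is trivial, as Lipschitz continuity around $\omega$ forces continuity at $\omega$. The substantive implication is $(\beta_4)\Rightarrow(\beta_3)$, where I must strengthen mere continuity of $\varphi$ to \emph{local Lipschitz} continuity. To do this I would first use the equivalences already proved to recover (i), (ii), (iii); then, by Lemma \ref{theo:sol_nonemp} together with the upper semicontinuity furnished by Theorem \ref{theo:usc} and the singleton property (iii), I would produce a neighborhood $\mathcal{N}$ of $\omega$ on which ${\rm Sol}(\omega')$ is nonempty and uniformly bounded by some constant $M$. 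For $\omega_1,\omega_2\in\mathcal{N}$ with optimal points $x_1,x_2$ of norm at most $M$, I would estimate $\varphi(\omega_1)-\varphi(\omega_2)$ by comparing each optimal point against a nearby feasible point of the other problem: exploiting the regularity (Slater) condition I would construct, for a near-optimal point of one problem, a feasible point of the other whose distance is controlled linearly by $\Vert\omega_1-\omega_2\Vert$, and then the quadratic structure gives a bound of the shape $(C_1 + C_2 M + C_3 M^2)\Vert\omega_1-\omega_2\Vert$.

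The hard part will be exactly this last quantitative step. The feasible-set tools at hand, namely Lemma \ref{pro-lscF}, deliver only \emph{qualitative} lower semicontinuity of $F(\cdot)$, whereas the Lipschitz estimate demands a quantitative, Robinson/metric-regularity-type bound asserting that, under Slater's condition, feasible points displace at most linearly in the perturbation of the constraint data. Producing such a bound, and verifying that its constants can be taken uniform over $\mathcal{N}$ while the objective data $(T,c)$ vary simultaneously with the constraint data, is the main obstacle; once it is secured, combining it with the uniform bound $M$ on the solutions and the boundedness of the operator perturbations makes the Lipschitz conclusion $(\beta_3)$ routine, completing the cycle and hence the equivalence of all four statements.
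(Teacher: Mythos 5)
Your overall architecture — reduce every qualitative implication to Theorems \ref{theo:usc}, \ref{theo:lsc} and \ref{theo:covf}, with the only genuinely new work being the upgrade from continuity of $\varphi(\cdot)$ to Lipschitz continuity — is exactly the paper's architecture; the paper runs the cycle $(\beta_1)\Leftrightarrow(\beta_2)$, $(\beta_2)\Rightarrow(\beta_3)\Rightarrow(\beta_4)\Rightarrow(\beta_1)$, while you prove $(\beta_1)\Leftrightarrow(\beta_2)$, $(\beta_1)\Leftrightarrow(\beta_4)$ and $(\beta_3)\Leftrightarrow(\beta_4)$, which is logically interchangeable, and your handling of those implications via the three cited theorems is correct.

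The genuine gap is the one step you yourself label ``the main obstacle'': the quantitative, Robinson/metric-regularity-type bound on the feasible set under Slater's condition. You describe what is needed — that feasible points of one perturbed problem can be matched by feasible points of another at distance controlled linearly by $\Vert\omega^1-\omega^2\Vert$ — but you neither prove it nor cite a result delivering it, so your argument for the Lipschitz implication is incomplete precisely where the theorem ceases to be a formal consequence of the earlier results. The paper closes this gap not by proving such a bound from scratch but by invoking a known one: under the regularity condition (ii), the feasible set mapping $F(\cdot)$ has the Aubin property at $\omega$ for $x\in F(\omega)$ (citing \cite[Corollary 2.2]{Dempe}), i.e.\ there exist $\varepsilon,\gamma,k_F\geqslant 0$ such that $F(\omega^1)\cap U^{\gamma}(x)\subset F(\omega^2)+k_F\Vert\omega^1-\omega^2\Vert\,{\mathbb B}_{\Omega}(0,1)$ for all $\omega^1,\omega^2$ in an $\varepsilon$-neighborhood of $\omega$. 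With this in hand the computation is the routine one you anticipate: lower semicontinuity of ${\rm Sol}(\cdot)$ keeps a solution $x^1$ of the perturbed problem inside $U^{\gamma}(x)$, the Aubin property produces a feasible comparison point $x^2$ with $\Vert x^2-x^1\Vert\leqslant k_F\Vert\omega^2-\omega^1\Vert$, and the local Lipschitz modulus $k_f$ of $f$ in the joint variable $(x,\omega)$ on a bounded neighborhood (this plays the role of your constant $C_1+C_2M+C_3M^2$) gives $\vert\varphi(\omega^1)-\varphi(\omega^2)\vert\leqslant\max\{k_f,\,k_f k_F\}\Vert\omega^1-\omega^2\Vert$. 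Note also that your worry about the objective data $(T,c)$ varying simultaneously with the constraint data is harmless: the objective perturbation is absorbed entirely by the Lipschitz modulus of $f$ on a bounded set and never enters the feasible-set estimate. So your plan becomes a proof only after you supply, or cite, the Aubin property of $F(\cdot)$ under Slater's condition; as written, that essential ingredient is missing.
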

	
	\begin{proof}
		
	The equivalence between  ${\rm (\beta_1)}$ and  ${\rm (\beta_2)}$ follows immediately from  the Theorem \ref{theo:usc} and \ref{theo:lsc}.

		 We next prove that $\rm (\beta_2)$  implies $\rm (\beta_3)$. Indeed, suppose that the solution map ${\rm Sol} (\cdot)$ is continuous at $\omega$. Then, solution map ${\rm Sol} (\cdot)$ is lower semicontinuous at $ \omega$.   It follows from Theorem \ref{theo:lsc} that  ${\rm Sol} (\omega)$ is a singleton, ${\rm Sol}\eqref{eq:P_0} =\{0\}$ and the system $g_i(x, \omega)\leqslant 0$ is regular.
		It remains to show that  $\varphi ( \cdot )$  is locally Lipschitz at $\omega$.
		Since $f(\cdot \, , \cdot)$ is continuously differentiable at $(x, \omega)$,  there exists $\delta > 0$ so that $f$ is Lipschitz continuous with Lipschitz modulus $k_f > 0$ on the set $\mathbb U^{\delta}_{{\cal H}\times \Omega}( x,  \omega)$. 
By the regularity of the inequalities system  $g_i(x, \omega)\leqslant 0,$  the feasible set mapping $F\, (\cdot): \Omega \rightrightarrows {\cal H}$ is defined by 
		$F\, (\omega^{\prime}) = \{x\in {\cal H}\mid g_i(x, \omega^{\prime})\leqslant 0,\; i=1,\ldots, m\}$  has the Aubin
		property at $ \omega$ for some $  x\in F(\omega)$  (see, for instance, \cite[Corollary 2.2]{Dempe}), that
		is, there exist $\varepsilon, \gamma, k_F\geqslant 0$		 such that
		\begin{align}\label{eq:lips1}
		F(\omega^1)\cap U^{\gamma}( x)\subset F(\omega^2)+k_F\Vert \omega^1-\omega^2\Vert {\mathbb B}_{\Omega}(0, 1)
		\end{align}
		for every $\omega^1, \omega^2\in \mathbb U^{\varepsilon}_{\Omega}( \omega)$.		
		Without loss of generality, we may assume that  $\max\{\varepsilon, 2\varepsilon k_F+\gamma\} < \delta$.
		
		Let $\omega^1, \omega^2\in \mathbb U^{\varepsilon}_{\Omega}( \omega)$ be chosen arbitrarily.    By Lemma \ref{pro-lscF}, the feasible set mapping $F (\cdot): \Omega \rightrightarrows {\cal H}$ is lower semicontinuous at $\omega$. This implies that $F(\omega^1) \neq\emptyset$ for $\varepsilon > 0$ small enough. On the other
		hand, by ${\rm Sol}\eqref{eq:P_0} =\{0\}$, it follows that $\omega \in K$. Since $K$ is open, there exists $\varepsilon > 0$  small
		enough such that $\omega^1\in K $. It follows from Lemma \ref{theo:sol_nonemp} that ${\rm Sol}(\omega^1)\neq\emptyset$, for $\varepsilon > 0$  small
		enough. Hence there exists $x^1\in {\rm Sol}(\omega^1)$.
		
		Since $\rm Sol( \cdot )$ is lsc at $\omega$, we can assume that $\varepsilon > 0$ small enough to guarantee that $x^1\in U^{\gamma}( x)$. This implies that $x^1\in {\rm Sol(\omega^1)}\cap  U^{\gamma}( x)\subset F(\omega^1)\cap U^{\gamma}( x)$.	
		
		Due to \eqref{eq:lips1}, there exists $x^2\in F(\omega^2)$ such that
		$\Vert x^2-x^1\Vert \leqslant k_F\Vert \omega^2-\omega^1 \Vert$.
		From the choice of $\varepsilon$ and $\delta$ we derive:
		\begin{alignat*}{2}
		&\Vert ( x^1, \omega^1) - (  x, \omega)\Vert& =&\max\{\Vert \omega^1-\omega\Vert, \Vert x^1- x \Vert \}
		<\max\{\varepsilon, \gamma\} < \delta,\\
		&\Vert ( x^2, \omega^2) - (  x, \omega)\Vert&=&\max\{\Vert \omega^2-\omega\Vert, \Vert x^2- x \Vert\}\\ 
		& &\leqslant& \max\{\Vert \omega^2-\omega\Vert, \Vert x^2- x^1\Vert +\Vert x^1-  x\Vert \}\\ 
		& &\leqslant& \max\{\Vert \omega^2-\omega\Vert, k_{F}\Vert \omega^2- \omega^1\Vert +\Vert x^1-  x\Vert \}\\ 
		& &<&\max\{\varepsilon, 2\varepsilon  k_F  + \gamma\} < \delta.
		\end{alignat*}
		
		Thus $( x_1, \omega_1)$ and $( x_2, \omega_2)$ belong to $\mathbb U^{\delta}_{{\cal H}\times \Omega}( x, \omega)$.
		
		Since $f$ is Lipschitz continuous on $\mathbb U^{\delta}_{{\cal H}\times \Omega}( x,  \omega)$, we finally get
		\begin{align*}
		\varphi(\omega_2)-\varphi(\omega_1)&\leqslant f(\omega_1, x_2) - f(\omega_1, x_1)
		\leqslant k_f \max\{\Vert \omega_2-\omega_1\Vert; \Vert x_2-x_1\Vert\}\\ 
		&\leqslant k_f \max\{\Vert \omega_2-\omega_1\Vert; k_{F}\Vert \omega_2-\omega_1\Vert\}= \max\{k_f; k_f k_F\}\Vert \omega_2-\omega_1\Vert.
		\end{align*}
		Changing the roles of $x_1$ and $x_2$, we can obtain
		\begin{align*}
		\varphi(\omega_1)-\varphi(\omega_2) \leqslant \max\{k_f; k_f k_F\}\Vert \omega_2-\omega_1\Vert.
		\end{align*}
		Hence $\varphi$ is  Lipschitz continuous around $ \omega$ with modulus $\max\{k_f; k_f k_F\}$.
		
		Clearly,  $\rm (\beta_3)$  implies $\rm (\beta_4)$.
		
		Finally, the implication $\rm (\beta_4)$  implies $\rm (\beta_1)$ follows from Theorem \ref{theo:lsc} and \ref{theo:covf}. The proof is complete. \qed
	\end{proof}
	
	\begin{example}\rm
		We consider the problem \eqref{eq:QP_w} with 
		$\Omega={\cal L}(\ell^2)\times {\ell^2}\times{\cal L}(\ell^2)\times {\ell^2}\times\mathbb R$.
		
		Let $\omega= (T, c, T_1, c_1, \alpha_1)$, where $T: \ell^2\to \ell^2$ is defined by $Tx=(0, -x_2, x_3,\ldots)$, $c=(1,0,\ldots)$, $T_1: \ell^2\to \ell^2$ is defined by $T_1x= (x_1, x_2,\ldots)$, $c_1=(0, 0, \ldots)$, $\alpha_1= -\frac{1}{2}$. This problem can be rewritten as follows
		\begin{center}
			$\underset{x\in F(\omega)}{\min} \; f(x, \omega) =  \frac{1}{2}(-x_2^2 + x_3^2+\ldots x_n^2 + \ldots) + x_1,$ 
		\end{center}
		where $F(\omega) = \{(x_1,x_2,\ldots)\in \ell^2\mid g_1(x, \omega) = \frac{1}{2}(x_1^2+x_2^2+\ldots) -\frac{1}{2}\leqslant 0\}$.
		
		Since $\langle x, Tx\rangle= (x_1^2+x_2^2+x_3^2+\ldots+x_n^2+\ldots) -(x_1^2+ 2x_2^2)$, by Proposition 3.79 in \cite{BS00},  $\langle x, Tx\rangle$ is a Legendre form.
		
		For $x= (0, 0,\ldots)$, we have $g_1(0,\omega) =-\frac{1}{2} < 0$. Hence $F(\omega)\neq 0$ and $g_1(x,\omega)\leqslant 0$ is regular.
		
		We have $0^+F(\omega) = \{(v_1,\ldots)\in \ell^2\mid T_1v= (v_1, v_2, \ldots) = 0\}= \{0\}$. Therefore ${\rm Sol}\eqref{eq:P_0} =\{0\}$.
		
		It is easy to check that $f(x, \omega) \geqslant \frac{1}{2}(x_1+1)^2 +(\frac{1}{2}x_2^2+ x_3^2+\ldots)- 1\geqslant -1$ for all $x\in F(\omega)$ and ${\rm Sol}(\omega) = \{-1,0,0,\ldots\}$.
		
		Hence, by Theorem \ref{theo:stability}, the solution map ${\rm Sol}(\cdot)$ is continuous at $\omega$ and the optimal value function $\varphi (\cdot)$ is Lipschitz continuous around $\omega$.
	\end{example}
	
	\begin{corollary}
		Consider the problem \eqref{eq:QP_w} where $\langle x, T x\rangle$ is elliptic. Suppose that the system $g_i(x, \omega)\leqslant 0$, $i=1,\ldots, m$, is regular. Then, the following four statements are equivalent:
		\begin{itemize}
			\item [{\rm ($\beta_1$)}] the solution map ${\rm Sol} (\cdot)$  is lower semicontinuous at $\omega$;
			\item [{\rm ($\beta_2$)}]   the solution map ${\rm Sol} (\cdot)$ is continuous at $\omega$;
			\item [{\rm ($\beta_3$)}] the optimal value function $\varphi ( \cdot )$  is Lipschitz continuous around $\omega$;
			\item [{\rm ($\beta_4$)}] the optimal value function $\varphi ( \cdot )$ is continuous 	at $\omega$.
		\end{itemize}
	\end{corollary}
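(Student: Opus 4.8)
The plan is to deduce this corollary directly from Theorem~\ref{theo:stability}, by checking that under the two standing hypotheses---ellipticity of $\langle x, Tx\rangle$ and regularity of the constraint system---the quadratic form is automatically a Legendre form and the solution set ${\rm Sol}(\omega)$ is automatically a singleton. Once these two facts are in place, the four assertions of the corollary coincide with the four assertions of Theorem~\ref{theo:stability}: the clause ``${\rm Sol}(\omega)$ is a singleton'' appearing in $(\beta_3)$ and $(\beta_4)$ of the theorem becomes vacuous and may be dropped.

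First I would record that every elliptic form is a Legendre form, so that Theorem~\ref{theo:stability} is applicable at $\omega$. By definition there is $\alpha>0$ with $\langle x, Tx\rangle\geqslant\alpha\Vert x\Vert^2$ for all $x$; being convex and continuous, $\langle x, Tx\rangle$ is weakly lower semicontinuous, and if $x_k\rightharpoonup x_0$ with $\langle x_k, Tx_k\rangle\to\langle x_0, Tx_0\rangle$, then $\alpha\Vert x_k-x_0\Vert^2\leqslant\langle x_k-x_0, T(x_k-x_0)\rangle\to 0$, whence $x_k\to x_0$; this is exactly Definition~\ref{de:Legendre}. Equivalently, an elliptic form is the extreme case of the decomposition in \cite[Proposition 3.79]{BS00} with vanishing finite-rank part.

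Next I would verify the two hypotheses underlying Theorems~\ref{theo:usc}, \ref{theo:lsc} and \ref{theo:covf}, namely that ${\rm Sol}\eqref{eq:P_0}=\{0\}$ and that the constraint system is regular. The latter is assumed and in particular yields $F(\omega)\neq\emptyset$. For the former, on the recession cone one has $\tfrac{1}{2}\langle v, Tv\rangle\geqslant\tfrac{\alpha}{2}\Vert v\Vert^2\geqslant 0$, with equality only at $v=0$; since $0\in 0^+F(\omega)$, problem \eqref{eq:P_0} is minimized uniquely at $v=0$, i.e.\ ${\rm Sol}\eqref{eq:P_0}=\{0\}$. Lemma~\ref{theo:sol_nonemp} then gives that ${\rm Sol}(\omega)$ is nonempty, closed and bounded, while the strict convexity of $f(\cdot,\omega)$ forced by ellipticity makes the minimizer unique; hence ${\rm Sol}(\omega)$ is a singleton.

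Finally I would assemble the equivalences. Because ``${\rm Sol}(\omega)$ is a singleton'' holds unconditionally under the present hypotheses, the corollary's $(\beta_3)$ and $(\beta_4)$ are equivalent to the theorem's $(\beta_3)$ and $(\beta_4)$ respectively, while $(\beta_1)$ and $(\beta_2)$ are literally the same; invoking the equivalence chain of Theorem~\ref{theo:stability} then closes the argument. I do not expect a genuine obstacle here: the only point requiring care is the dual role played by ellipticity---it simultaneously supplies the Legendre property (via \cite[Proposition 3.79]{BS00}) and forces uniqueness of the minimizer---and it is precisely this that allows the singleton hypothesis to be silently removed from $(\beta_3)$ and $(\beta_4)$.
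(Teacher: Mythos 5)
Your proposal is correct: the paper omits the proof as ``simple,'' and what it clearly intends is exactly your reduction --- ellipticity gives the Legendre property, forces ${\rm Sol}\eqref{eq:P_0}=\{0\}$, and (together with regularity, hence $F(\omega)\neq\emptyset$) makes ${\rm Sol}(\omega)$ a nonempty singleton by strong convexity, so the singleton clauses in $(\beta_3)$, $(\beta_4)$ of Theorem~\ref{theo:stability} become vacuous and the four equivalences transfer verbatim. Your direct verification that an elliptic form is Legendre (weak lower semicontinuity plus $\alpha\Vert x_k-x_0\Vert^2\leqslant\langle x_k-x_0,T(x_k-x_0)\rangle\to 0$) is sound and fills in the only nontrivial detail.
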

	
	The proof of this corollary is simple, so it is omitted.

\section{Conclusions}
By using the Legendre property of quadratic form,  we established  continuity properties  of the global solution map and the optimal value function for  quadratic programming problems under finitely many convex quadratic constraints in Hilbert spaces. 

In connection with Theorem \ref{theo:lsc},  the following question seems to be interesting:	Is there any verifiable  sufficient condition  for the set ${\rm Sol} (\omega)$ to be a singleton?

\section*{Acknowledgments}
This research is funded by Vietnam National Foundation for Science and Technology Development (NAFOSTED) under grant number 101.01-2014.39. 
The author would like to thank Prof. Nguyen Nang Tam for valuable remarks and suggestions. 
The author would like to express my sincere thanks to the anonymous referees and  editors for insightful comments and useful suggestions.

%%%%%%%%%%%%%%%%%%%%%%%


\begin{thebibliography}{}
	\bibitem{Bauschke2011} Bauschke, H. H., Combettes, P. L.:   Convex Analysis and Monotone Operator Theory in Hilbert Spaces, Springer (2011)
	\bibitem{Berge} Berge,  C.: Topological Spaces, Oliver and Boyd (1963)
	\bibitem{BS00}   Bonnans, J. F., Shapiro, A.: Perturbation Analysis of Optimization  Problems. Springer (2000)
	\bibitem{Brezis} Brezis, H.: Functional Analysis, Sobolev Spaces and Partial Differential Equations, Springer  (2011)
	\bibitem{Dempe}  Dempe,  S.,   Mehlitz, P.:  Lipschitz continuity of the optimal value
	function in parametric optimization, J Glob Optim  61,  363-377 (2015)	
	\bibitem{Dinh} Dinh, N., Goberna, M. A., López, M. A.:    
On	stability of the feasible set in optimization problems, SIAM J. Optim.  20(5), 2254-2280 (2010) 
	
	\bibitem{DongTam15b}  Dong, V. V.,  Tam, N. N.:  On the solution existence  for convex quadratic programming problems in Hilbert spaces,  Taiwanese J. Math.  20(6),  1417-1436 (2016)
\bibitem{DongTam15a} Dong, V. V.,  Tam, N. N.:	On the solution existence  of  nonconvex quadratic programming problems in Hilbert spaces,  submitted to Acta Math Vietnam
	\bibitem{Hestenes1}  Hestenes, M. R.: Applications of the theory of quadratic forms in Hilbert space to the calculus of variations,  Pacific J. Math.   1, 525-581 (1951) 	
	
	\bibitem{Ioffe}    Ioffe, A. D., Tikhomirov, V. M.: Theory of Extremal Problems, 
	North-Holland, Amsterdam (1978)
	
	
	\bibitem{KTY12}  Kim, D. S., Tam, N. N., Yen, N. D.: Solution existence  and stability of quadratic of quadratically constrained convex quadratic programs. Optim. Lett. 6, 363--373  (2012) 
	
	\bibitem{leetamyen05}  Lee, G. M., Tam, N. N., Yen, N. D.: Quadratic Programming and Affine Variational Inequalities: A Qualitative Study, Series: “Nonconvex Optimization and Its Applications”,  Springer (2005)
	
	\bibitem{Schochetman97}Schochetman,  I. E., Smith, R. L., Tsui, S. K.: Solution existence for infinite quadratic programming, Technical Report  97-10 (1997)
	
	\bibitem{Semple96} Semple, J.: Infinite positive-definite quadratic programming in a Hilbert space, J. Optim.Theory Appl. 88, 743-749 (1996)
	
	\bibitem{Sivakumar03} Sivakumar,  K. C., Swarna, J. M.:  Explicit solutions of infinite quadratic programs, J. Appl. Math. Comput. 12, 211 - 218 (2003)  	
	
	
	\bibitem{Tam99}  Tam, N. N.: On Continuity of the solution map in quadratic programming, Acta Math Vietnam 24,  47-61 (1999)
	
	\bibitem{Tam01} Tam, N. N.: Continuity of the optimal value function in indefinite quadratic programming, J Glob Optim  23,  43-61 (2002) 
	
	
	
\end{thebibliography}
\end{document}